
\documentclass[reqno,11pt]{article}
\usepackage{a4wide,xcolor,eucal,enumerate,mathrsfs}
\usepackage[normalem]{ulem}
\usepackage[pdfborder={0 0 0}]{hyperref}  

\frenchspacing

\usepackage{amsmath,amssymb,epsfig,amsthm}
\usepackage[latin1]{inputenc}

\numberwithin{equation}{section}

\newtheorem{theorem}{Theorem}[section]

\newtheorem{lemma}[theorem]{Lemma}

\newtheorem{definition}[theorem]{Definition}

\newcommand{\ppi}{{\mbox{\boldmath$\pi$}}}
\newcommand{\ggamma}{{\mbox{\boldmath$\gamma$}}}
\newcommand{\sggamma}{{\mbox{\scriptsize\boldmath$\gamma$}}}

\newcommand{\supp}{\mathop{\rm supp}\nolimits}
\renewcommand{\d}{{\mathrm d}}

\newcommand{\R}{\mathbb{R}}
\newcommand{\V}{\mathbb{V}}
\newcommand{\dom}{\rm D}

\newcommand{\LL}{\mathscr{L}}

\newcommand{\mm}{\mathfrak m}

\newcommand{\sfd}{{\sf d}}

\newcommand{\ff}{{\mbox{\boldmath$f$}}}
\newcommand{\prob}[1]{\mathscr P(#1)}
\newcommand{\probt}[1]{\mathscr P_2(#1)}

\newcommand{\geo}{{\rm{Geo}}}
\newcommand{\e}{{\rm{e}}}

\newcommand{\CD}{{\sf CD}}
\newcommand{\RCD}{{\sf RCD}}
\newcommand{\BE}{{\sf BE}}
\newcommand{\Ch}{{\sf Ch}}
\renewcommand{\H}{{\sf H}}

\newcommand{\LIP}{{\sf LIP}}
\newcommand{\DG}{\dom_\V(\Delta)\cap \LIP(X)}

\newcommand{\cI}{{\mathcal I}}
\newcommand{\rmD}{{\mathrm D}}

\hyphenation{geo-me-try geo-me-tries}

\title{Gaussian-type Isoperimetric  Inequalities in $\RCD(K,\infty)$ probability spaces for positive $K$}
\begin{document}
\author{Luigi Ambrosio
\thanks{Scuola Normale Superiore di Pisa (Italy) \emph{Email}: \textsf{luigi.ambrosio@sns.it}}
      \and
   Andrea Mondino   \thanks{MSRI-Berkeley\&Universit\"at Z\"urich, Institut f\"ur Mathematik. \emph{Email}: \textsf{andrea.mondino@math.uzh.ch}}
  }

\maketitle

\begin{abstract} In this paper we adapt the well-estabilished $\Gamma$-calculus techniques to the context of $\RCD(K,\infty)$
spaces, proving Bobkov's local isoperimetric inequality \cite{BL96},\cite{Bobkov97} and, when $K$ is positive, the Gaussian isoperimetric inequality in this class of spaces.
The proof relies on the measure-valued $\Gamma_2$ operator introduced by Savar\'e in \cite{Savare2013}.
\end{abstract}

\textit{Keywords:} metric geometry, metric analysis, Isoperimetric Inequality,  Gaussian measures, Ricci curvature.


\section{Introduction}
The goal of the paper is to extend to $\RCD(K,\infty)$ spaces, a class of non-smooth ``Riemannian'' and possibly infinite-dimensional 
metric measure spaces with lower Ricci curvature bounds, the L\'evy-Gromov Gaussian isoperimetric inequality. Our proof is strongly inspired by the arguments of 
Bakry-Ledoux \cite{BL96} (see also the recent monograph \cite{BGL14}), where the authors gave a proof of such an inequality 
for infinite dimensional diffusion operators via $\Gamma$-calculus. This calculus, also according to the presentation in \cite{BGL14}, 
relies on the assumption of the existence of a dense algebra of functions stable under the $\Gamma$ operator, an assumption that
seems to be problematic in the category of $\RCD(K,\infty)$ spaces. Savar\'e realized in
\cite{Savare2013} that, by looking at the iterated $\Gamma$ operator $\Gamma_2$ as a measure-valued operator, the assumption on the algebra
can be relaxed, and that most techniques of $\Gamma$-calculus still apply (heuristically, because the negative part of the tensor
$\Gamma_2$ is still absolutely continuous w.r.t. $\mm$); this point of view has also been exploited in \cite{Gigli} for the development
of Hessian estimates and a full theory of covariant derivatives. In this paper we follow the same line of thought to prove Bobkov's 
inequality and the L\'evy-Gromov Gaussian isoperimetric inequality in sharp form. On the technical side, while the computations 
are essentially the same (see in particular Chapter 8.5.2 of \cite{BGL14}), we have tried to emphasize the technical issues arising from the not-so-smooth
context of $\RCD(K,\infty)$ spaces,  in particular the lack of smoothness of the function $\sqrt{\cI^2(g)+c_\alpha(t)\Gamma(g)}$ 
appearing in the proof of Bobkov's inequality.

One of the motivations for writing this note comes from \cite{AmbrosioHonda}, where the isoperimetric result 
of this paper is used to prove
  new compactness results in variable Sobolev spaces $H^{1,p}(X,\sfd_i,\mm_i)$ relative to
$\RCD(K,\infty)$ metric measure structures $(X,\sfd_i,\mm_i)$, in the whole range of exponents 
$p\in [1,\infty)$ ($p=1$ corresponds to $BV(X,\sfd_i,\mm_i)$).

Let us mention that the isoperimetric inequality for Gaussian measures is originally  due to
C. Borell \cite{Borell} and V. Sudakov - B. Cirel'son \cite{SuCi}. The functional form of
the Gaussian isoperimetric inequality  presented  in Theorem \ref{thm:Bob} was introduced
by S. Bobkov \cite{Bobkov97} who established it first on the two-point space and then in the
limit for the Gaussian measure by the central limit Theorem.   On the basis of the Bobkov inequality,
the local inequalities of Theorem \ref{thm:LocBob}  were established by D. Bakry and M. Ledoux \cite{BL96} in the context of smooth infinite dimensional spaces with lower Ricci curvature bounds.
Theorem \ref{thm:IsopIneq}  may be considered as the \emph{non-smooth, infinite-dimensional }extension of a celebrated result
of P. L\'evy and M. Gromov \cite{Gro} comparing the isoperimetric profile
of a Riemannian manifold with a strictly positive lower bound on the Ricci curvature
to that of the sphere with the same (constant) curvature and dimension. Let us mention that the dimensional L\'evy-Gromov inequality has been recently extended to non-smooth spaces with lower Ricci curvature bounds  in \cite{CM}. 

\medskip
\noindent {\bf Acknowledgment:} 
 Part  of the work has been  developed while A. M. was lecturer at the  Institut f\"ur Mathematik at the  Universit\"at Z\"urich and part when he was in residence at the Mathematical Science Research Institute in Berkeley, California, during the spring  semester 2016 and was supported by the National Science Foundation under the Grant No. DMS-1440140. He wishes to express his gratitude to both the institutes for the stimulating atmosphere and the excellent working conditions.   

\section{Preliminaries}\label{sec:preli}
Throughout the paper $(X,\sfd,\mm)$ will be a metric measure space, m.m.s. for short, i.e. $(X,\sfd)$ is a complete and separable metric space and $\mm$ is a nonnegative Borel measure. Even if some of the statements of this paper hold in case $\mm$ is a sigma finite measure, for simplicity we will always assume $\mm(X)=1$ and $\supp(\mm)=X$.  We shall denote by $\LIP(X)$ the space
of Lipschitz functions,  by $\prob X$ the space of Borel probability measures on the complete and separable metric space $(X,\sfd)$.
We also denote by 
\[
\probt X :=\Big\{\mu\in\prob X\ :\ \int_X\sfd^2(x_0,x)\,\d\mu(x)<\infty\,\,\,\text{for some (and hence all) $x_0\in X$}\Big\}.
\] 
the subspace consisting of all the probability measures with finite second moment.  The \emph{relative entropy functional} ${\rm Ent}_\mm:\probt{X}\to (-\infty,\infty]$ 
with respect to a probability measure $\mm$ is defined by
\begin{equation}\label{eq:defRelentropy}
{\rm Ent}_\mm(\mu):=
\begin{cases}
\int_X f\log f\,\d\mm& \text{if $\mu=f\mm$;}\\
\infty & \text{otherwise.}
\end{cases}
\end{equation}
Note that, by Jensen's inequality, this functional is nonnegative. 
\\
A curve $\gamma:[0,1]\to X$ is a \emph{geodesic} if
\begin{equation}\label{defgeo}
\sfd(\gamma_s,\gamma_t)=|t-s|\sfd(\gamma_0,\gamma_1)\qquad\forall s,\,t\in
[0,1].
\end{equation}
We will denote by $\geo(X)$ the space of all constant speed
geodesics $\gamma:[0,1]\to X$, namely $\gamma\in\geo(X)$ if
\eqref{defgeo} holds.

\subsection{Lower Ricci curvature bounds}\label{Subsec:CB} 

In the sequel we briefly recall those basic definitions and properties of spaces with lower Ricci curvature bounds that we will need later on.

For $\mu_0,\mu_1 \in \probt X$ the quadratic transportation distance $W_2(\mu_0,\mu_1)$ is defined by
\begin{equation}\label{eq:Wdef}
  W_2^2(\mu_0,\mu_1) = \inf_\sggamma \int_X \sfd^2(x,y) \,\d\ggamma(x,y),
\end{equation}
where the infimum is taken over all $\ggamma \in \prob{X \times X}$ with $\mu_0$ and $\mu_1$ as the first and the second marginal.
It turns out that any geodesic $(\mu_t) \in \geo(\probt X)$ can be lifted to a measure $\ppi \in \prob{\geo(X)}$, in such a way that $(\e_t)_\#\ppi = \mu_t$ for all $t \in [0,1]$. 

We turn to the formulation of the $\CD(K,\infty)$ condition, coming from the seminal works of Lott-Villani \cite{Lott-Villani09} and Sturm \cite{Sturm06I}. 
\begin{definition}[$\CD(K,\infty)$ condition]
Given $K \in \R$,  we say that a m.m.s.  $(X,\sfd,\mm)$
 is a $\CD(K,\infty)$-space if for any two measures  $\mu_0,\,\mu_1 \in D({\rm Ent}_\mm)$ there  exists
a geodesic $(\mu_t) \in \geo(\probt X)$ which 
satisfies the convexity inequality
\begin{equation}\label{eq:CDdef}
{\rm Ent}_\mm (\mu_t) \le (1-t) {\rm Ent}_\mm (\mu_0) + t {\rm Ent}_\mm (\mu_1)
                    - \frac{K}{2}t(1-t)W_2^2(\mu_0,\mu_1), \quad \text{for all $t \in [0,1]$}.
\end{equation}
 \end{definition}
 
Notice that the above-mentioned lifting property shows that the $\CD(K,\infty)$ condition implies that the class 
 $\geo(X)$ is sufficiently rich to provide $\ppi \in \prob{\geo(X)}$ connecting any two measures with finite entropy. Using this,
 one can prove that $X=\supp\mm$ is length, i.e. $\sfd(x,y)$ is the infimum of the length of the curves connecting $x$ to $y$
 for all $x,\,y\in\supp\mm$.
  
 Two crucial properties of the $\CD(K,\infty)$-condition are the consistency with the smooth counterpart and the stability under convergence. More 
 precisely, it was shown both in  \cite{Lott-Villani09} and \cite{Sturm06I} that a smooth Riemannian manifold satisfies the  $\CD(K,\infty)$-condition if and only if it has Ricci curvature bounded below by $K$. Regarding the stability, it was proved by Lott-Villani \cite{Lott-Villani09} that the $\CD(K,\infty)$-condition is stable under pointed  measured Gromov-Hausdorff convergence of \emph{proper} pointed metric measure spaces; Sturm \cite{Sturm06I} instead introduced a new distance, denoted with  ${\mathbb D}$, between normalized metric measure spaces and proved the stability of the $\CD(K,\infty)$-condition under such a convergence. The stability in the  general case of non-proper and non-normalized spaces was established in \cite{GigliMondinoSavare} where a new notion of convergence, called pointed measured Gromov convergence, was introduced (let us mention that such a convergence coincides with the measured Gromov Hausdorff one in case of uniformly doubling spaces and with the ${\mathbb D}$-convergence in case of normalized spaces) and the stability of the $\CD(K,\infty)$-condition under such a convergence was established.

\subsection{The Cheeger energy,  its gradient flow and the $\RCD(K,\infty)$-condition}
First of all recall that the  metric slope of a Lipschitz function $f \in \LIP(X)$ is defined by
\begin{equation}\label{eq:defDF}
{\rm lip\,}f (x):=\limsup_{y\to x} \frac{|f(x)-f(y)|}{\sfd(x,y)}
\end{equation}
(with the convention ${\rm lip\,}f(x)=0$ if $x$ is an isolated point).
The Cheeger energy (introduced in \cite{Cheeger00} and further studied in \cite{Ambrosio-Gigli-Savare11}) is defined as the $L^{2}$-lower semicontinuous envelope of the functional $f \mapsto \frac{1}{2} \int_{X} {\rm lip}^2f \, \d \mm$, i.e.:
\begin{equation}
\Ch_{\mm}(f):=\inf \left\{ \liminf_{n\to \infty}  \frac{1}{2} \int_{X} {\rm lip}^2f_n\, \d \mm \, : \, f_n\in \LIP(X), \; f_{n}\to f \text{ in }L^{2}(X,\mm) \right \}.
\end{equation}
If $\Ch_{\mm}(f)<\infty$ it was proved in \cite{Cheeger00,Ambrosio-Gigli-Savare11} that the set
$$
G(f):= \left\{g \in L^{2}(X,\mm)\,: \,  \exists \, f_{n} \in \LIP(X), \, f_n\to f, \, {\rm lip\,}f_n \rightharpoonup h\geq g  \text{ in } L^{2}(X,\mm) \right\}
$$
is closed and convex, therefore it
admits a unique element of minimal norm called  \emph{minimal weak upper gradient} and denoted by $|Df|_{w}$.   The Cheeger energy can be then  represented by integration as
$$\Ch_{\mm}(f):=\frac{1}{2} \int_{X} |Df|_{w}^{2} \, \d \mm. $$
It is not difficult to see that $\Ch_{\mm}$ is a $2$-homogeneous, lower semi-continuous, convex functional on $L^{2}(X,\mm)$, whose proper domain ${\dom}(\Ch_{\mm}):=\{f \in L^{2}(X,\mm)\,:\, \Ch_{\mm}(f)<\infty\}$ is a dense linear subspace of $L^{2}(X,\mm)$. It then admits an $L^{2}$-gradient flow which is a continuous semigroup of contractions $(\H_{t})_{t\geq 0}$ in $L^{2}(X,\mm)$, whose continuous trajectories $t \mapsto \H_{t} f$, for $f \in L^{2}(X,\mm)$, 
are locally Lipschitz  curves from $(0,\infty)$ with values into  $L^{2}(X,\mm)$.  We can now define the $\RCD(K,\infty)$-condition introduced in \cite{Ambrosio-Gigli-Savare11b} (see also \cite{AmbrosioGigliMondinoRajala} for the present simplified axiomatization and for the generalization to $\sigma$-finite reference measures).

\begin{definition}[$\RCD(K,\infty)$-space]
Let  $(X,\sfd,\mm)$ be a probability metric measure space. 
We say that $(X,\sfd,\mm)$ is an $\RCD(K,\infty)$-space if it satisfies the $\CD(K,\infty)$-condition, and moreover  
Cheeger's energy is quadratic, i.e. it satisfies the parallelogram identity
 \begin{equation}\label{eq:ChQuad}
 \Ch_{\mm}(f+g)+\Ch_{\mm}(f-g)=2\Ch_{\mm}(f) + 2 \Ch_{\mm}(g), \quad \forall f,\,g \in \dom(\Ch_{\mm}).
 \end{equation}
\end{definition}

If $(X,\sfd,\mm)$ is an $\RCD(K,\infty)$-space then the Cheeger energy induces 
the Dirichlet form ${\mathcal E}(f):=2 \Ch_\mm(f)$ which is strongly local and admits the Carr\'e du Champ
\begin{equation}\label{eq:defGamma}
\Gamma(f):=|Df|_{w}^{2}, \quad \text{for every }f \in {\dom}(\Ch_{\mm}).
\end{equation}
Moreover the sub-differential $\partial \Ch_{\mm}$ is single-valued and coincides with the linear generator $\Delta$ of the heat flow semi-group $(\H_{t})_{t\geq 0}$ defined above. 

The $\RCD(K,\infty)$-condition was introduced in order to single out in a more precise way, compared to the $\CD$ theory, non-smooth ``Riemannian'' structures; indeed, while on the one hand it is possible to give examples of smooth Finsler manifolds satisfying the $\CD(K,\infty)$-condition, on the other hand it is not difficult to see that if a Finsler manifold satisfies \eqref{eq:ChQuad} then it is actually Riemannian. Let us finally mention that the stability of the  $\RCD(K,\infty)$ condition was proved in \cite{Ambrosio-Gigli-Savare11b} for normalized metric measure spaces with respect to the  ${\mathbb D}$-convergence of Sturm, and the general case of  pointed-Gromov convergence of spaces endowed with $\sigma$-finite reference measures  was settled in \cite{GigliMondinoSavare}.

\subsection{The $\BE(K,\infty)$-condition and its  self-improvement} \label{SS:GammaCalc}

We set by definition $\V:={\dom}(\Ch_{\mm})$ and we endow such space with the norm $$\|f\|_{\V}:=\sqrt{\|f\|^2_{L^{2}(X,\mm)}+2 \Ch_{\mm}(f)}.$$
 If $(X,\sfd,\mm)$ is an $\RCD(K,\infty)$-space then $\V$ is a Hilbert space, while it is not difficult to see that in a general m.m.s. $\V$ is a Banach space (indeed \eqref{eq:ChQuad} is equivalent to require that $\V$ is Hilbert).
Moreover, still under the $\RCD(K,\infty)$ assumption, 
the very construction of $\Ch_{\mm}$ and the fact that $\Ch_\mm$ is quadratic imply that $\LIP(X)\cap L^{\infty}(X,\mm)$ is dense in $\V$ (\cite[Proposition 4.10]{Ambrosio-Gigli-Savare11b}).
 
The Laplace operator, introduced above as the infinitesimal generator of the Heat semi-group, can be defined also via integration by parts as the operator  $-\Delta:\V\to \V'$ given by
\begin{equation}
  \label{eq:defDelta}
  \int_X  (-\Delta f) \; g \, \d \mm :=\int_X \Gamma(f,g) \d \mm, \quad\forall f,\,g\in \V,
\end{equation}
and it  is an unbounded self-adjoint non-negative operator with domain ${\dom}(\Delta)$.
\\ Let us also introduce the functional spaces 
\begin{equation}\label{eq:defDVDelta}
{\dom}_\V(\Delta)=\big\{f\in \V\,:\,
  \Delta f\in \V\big\}
 \end{equation} 
 and 
 \begin{equation}\label{eq:defDLinfDelta}
 {\dom}_{L^{\infty}}(\Delta)=\big\{f\in \V\cap L^{\infty}(X,\mm)\,:\,
  \Delta f\in L^{\infty}(X,\mm)\big\}.
\end{equation}
If $(X,\sfd,\mm)$ is an $\RCD(K,\infty)$-space, the multilinear form $\Gamma_{2}:{\dom}_{\V}(\Delta)\times {\dom}_{\V}(\Delta)\times {\dom}_{L^{\infty}}(\Delta) \to \R$ is defined by
$$\Gamma_{2}[f,g;\varphi]:=\frac{1}{2}\int_{X}\left[ \Gamma(f,g) \, \Delta \varphi- \left( \Gamma(f,\Delta g)+\Gamma(g, \Delta f)\right)  \varphi \right] \, \d \mm.$$
When $f=g$ we set
$$\Gamma_{2}[f;\varphi]:=\Gamma_{2}[f,f;\varphi]:=\int_{X}\left(\frac{1}{2} \Gamma(f) \, \Delta \varphi-  \Gamma(f,\Delta g)  \varphi \right) \, \d \mm, $$
so that
$$
\Gamma_{2}[f,g;\varphi]=\frac{1}{4}\Gamma_{2}[f+g;\varphi]-\frac{1}{4}\Gamma_{2}[f-g;\varphi].
$$
The $\Gamma_{2}$ operator provides a weak version of the Bakry-\'Emery condition \cite{BE83} (see also  \cite{B94} and the recent monograph \cite{BGL14}). More precisely it was proved in \cite{Ambrosio-Gigli-Savare11b} 
(see also \cite{AmbrosioGigliMondinoRajala} for the $\sigma$-finite case) that $\RCD(K,\infty)$ implies 
the Bakry-\'Emery condition $\BE(K,\infty)$:

\begin{theorem}
Let $(X,\sfd,\mm)$ be an $\RCD(K,\infty)$-space, then $\BE(K,\infty)$-condition holds, i.e.
\begin{equation}\label{eq:BEKinf}
\Gamma_{2}[f;\varphi]\geq K \int_{X}\Gamma(f) \, \varphi \, \d \mm, \quad \text{for every } (f,\varphi)\in {\dom}_{\V}(\Delta)\times {\dom}_{L^{\infty}}(\Delta), \; \varphi \geq 0 \; \mm\text{-a.e. }. 
\end{equation}
\end{theorem}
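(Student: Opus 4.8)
The plan is to obtain the weak Bochner inequality \eqref{eq:BEKinf} by differentiating at $t=0$ a \emph{pointwise} Bakry--Émery gradient estimate for the heat semigroup $(\H_t)$. So the first step is to establish that, for every $f\in\V$ and every $t\ge0$,
\begin{equation}
\Gamma(\H_t f)\ \le\ e^{-2Kt}\,\H_t\big(\Gamma(f)\big)\qquad\mm\text{-a.e.},
\tag{$\star$}
\end{equation}
and the second is to test $(\star)$ against a nonnegative $\varphi$, integrate, and let a difference quotient in $t$ tend to $0$.

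The estimate $(\star)$ is where the full strength of the $\RCD(K,\infty)$ hypothesis — and not just $\CD(K,\infty)$ — is genuinely used, and I would import it from \cite{Ambrosio-Gigli-Savare11b} (see also \cite{AmbrosioGigliMondinoRajala} for the present normalization) together with its proof scheme: since $\Ch_\mm$ is quadratic, the $L^2$-gradient flow $(\H_t)$ of $\Ch_\mm$ coincides with the metric gradient flow of ${\rm Ent}_\mm$ in $(\probt X,W_2)$; the $\CD(K,\infty)$-convexity of ${\rm Ent}_\mm$, together with the quadraticity of $\Ch_\mm$, upgrades the latter to an $\mathrm{EVI}_K$-flow, whence the Wasserstein contraction $W_2(\H_t\mu,\H_t\nu)\le e^{-Kt}W_2(\mu,\nu)$ by the general theory of $\mathrm{EVI}$-flows; and Kuwada's duality lemma (see \cite{Ambrosio-Gigli-Savare11b}) translates this contraction precisely into $(\star)$, first for bounded Lipschitz $f$ and then, by density and the $L^2$-lower semicontinuity of $f\mapsto|Df|_w$, for all $f\in\V$.

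Granted $(\star)$, fix $f\in\dom_\V(\Delta)$ and $\varphi\in\dom_{L^\infty}(\Delta)$ with $\varphi\ge0$ $\mm$-a.e. Multiplying $(\star)$ by $\varphi$, integrating over $X$, and using that $\H_t$ is self-adjoint on $L^2$ and — since $\mm(X)=1$ — extends to a mass-preserving contraction on $L^1$, so that $\int_X\varphi\,\H_t(\Gamma(f))\,\d\mm=\int_X(\H_t\varphi)\,\Gamma(f)\,\d\mm$ with $\Gamma(f)\in L^1$, one gets $\int_X\varphi\,\Gamma(\H_t f)\,\d\mm\le e^{-2Kt}\int_X(\H_t\varphi)\,\Gamma(f)\,\d\mm$ for all $t\ge0$, with equality at $t=0$. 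Hence
\[
\psi(t):=e^{-2Kt}\int_X(\H_t\varphi)\,\Gamma(f)\,\d\mm-\int_X\varphi\,\Gamma(\H_t f)\,\d\mm
\]
is nonnegative and vanishes at $t=0$, so $\lims_{t\downarrow0}\psi(t)/t\ge0$. It remains to identify this $\limsup$ as $\psi'(0^+)$ and compute it termwise. For the first term, writing $\H_t\varphi-\varphi=\int_0^t\H_s(\Delta\varphi)\,\d s$ and pairing with $\Gamma(f)\in L^1$ — here $\Delta\varphi\in L^\infty$ and $\H_s(\Gamma(f))\to\Gamma(f)$ in $L^1$ as $s\downarrow0$ is exactly what is needed — gives derivative $-2K\int_X\varphi\,\Gamma(f)\,\d\mm+\int_X(\Delta\varphi)\,\Gamma(f)\,\d\mm$. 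For the second term, the Leibniz identity $\Gamma(a)-\Gamma(b)=\Gamma(a-b,a+b)$ together with the right differentiability of $t\mapsto\H_t f$ in $\V$ at $0$ (which holds precisely because $\Delta f\in\V$) gives $\big(\Gamma(\H_t f)-\Gamma(f)\big)/t=\Gamma\big(\tfrac{\H_t f-f}{t},\,\H_t f+f\big)\to 2\Gamma(f,\Delta f)$ in $L^1$, hence derivative $2\int_X\varphi\,\Gamma(f,\Delta f)\,\d\mm$. Inserting these into $\psi'(0^+)\ge0$ and dividing by $2$ is exactly $\Gamma_2[f;\varphi]\ge K\int_X\Gamma(f)\,\varphi\,\d\mm$.

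The main difficulty lies in Step 1: the one-line estimate $(\star)$ compresses a substantial chain of nontrivial facts (identification of the $L^2$- and Wasserstein gradient flows of the entropy, the upgrade from $K$-convexity to $\mathrm{EVI}_K$, Kuwada's duality), and it is the only point where $\RCD$ — rather than $\CD$ plus soft functional analysis — really enters. Step 3 is routine in outline but genuinely delicate at the endpoint $t=0$, since $\Gamma(f)$ is a priori only $L^1$: the devices used above — rewriting $\H_t\varphi-\varphi$ as a time-integral and commuting $\Delta$ with $\H_s$ (which exploits $\Delta\varphi\in L^\infty$), and combining the Leibniz identity with the $\V$-differentiability of $\H_t f$ (which exploits $\Delta f\in\V$) — are precisely what legitimizes the differentiation under the integral sign; alternatively one may first prove the inequality for the regularized pair $(\H_\eps f,\H_\eps\varphi)$ and let $\eps\downarrow0$, using that $f_n\to f$ in $\dom_\V(\Delta)$ forces $\Gamma(f_n)\to\Gamma(f)$ and $\Gamma(f_n,\Delta f_n)\to\Gamma(f,\Delta f)$ in $L^1$ (by Cauchy--Schwarz for the carré du champ) and $\varphi\in L^\infty$.
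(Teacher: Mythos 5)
Your argument is correct, and it is essentially the standard proof: the paper itself states this theorem without proof, citing \cite{Ambrosio-Gigli-Savare11b} (and \cite{AmbrosioGigliMondinoRajala}), and what you reconstruct — the Wasserstein/EVI route to the pointwise gradient contraction (which the paper records as \eqref{eq:BEflow1}), followed by differentiation of $t\mapsto e^{-2Kt}\int(\H_t\varphi)\Gamma(f)\,\d\mm-\int\varphi\,\Gamma(\H_tf)\,\d\mm$ at $t=0^+$ — is precisely the derivation in that reference. The two delicate points you flag (strong $L^1$-continuity of $\H_s$ on $\Gamma(f)$ for the $\Delta\varphi$ term, and $\V$-differentiability of $\H_tf$ at $0$ via $\Delta f\in\V$ combined with $\Gamma(a)-\Gamma(b)=\Gamma(a-b,a+b)$) are exactly the right ones, and are handled correctly.
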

Under natural regularity assumptions on the distance canonically associated to the Dirichlet form it is also to prove the converse 
implication, i.e. from $\BE(K,\infty)$ to $\RCD(K,\infty)$, see \cite{AGSBaEm} (see also \cite{ErbarKuwadaSturm} and \cite{AmbrosioMondinoSavareNLDiff} for the dimensional case).

A crucial property of the $\BE(K,\infty)$ condition that will be used later in the paper, is the self-improvement established by Savar\'e \cite[\S\,3]{Savare2013} in the present setting. 
 
\begin{theorem}\label{thm:Gamma2Impr}
  Let $(X,\sfd,\mm)$ be an $\RCD(K,\infty)$-space.
  \begin{enumerate}[\rm (1)]
  \item 
    For every
    $f \in {\dom}_\V(\Delta)\cap \LIP(X)$
    we have $\Gamma(f)\in \V$ with
  \begin{equation}
    \label{eq:G3}
    \Ch_{\mm}(\Gamma(f)) \leq  -\int_X \Big(
    2K\Gamma(f)^2+ 2\Gamma(f)\Gamma(f, \Delta f) \Big)\,\d\mm.
  \end{equation}
  \item
  ${\dom}_\V(\Delta)\cap \LIP(X)$ is an algebra (i.e. closed w.r.t.~pointwise
  multiplication)
  and, more generally,
  if $\ff=(f_i)_{i=1}^n\in {(\DG)}^n$ then $\Phi(\ff)\in \DG$ 
  for every smooth function $\Phi:\R^n\to \R$ with $\Phi(0)=0$.
  \item If $f\in {\dom}_\V(\Delta)\cap \LIP(X)$, then the linear functional
  {$$
  \varphi\in\LIP(X)\cap L^\infty(X,\mm)\mapsto -\int_X\Gamma(\varphi,\Gamma(f))\,\d\mm
  $$}
  can be represented by a signed Borel measure  $\Delta^\star \Gamma(f)$ which
  can be extended to a unique element in $\V'$ (by integration of the $\Ch_\mm$-quasicontinuous
  representative) and, by defining
  \begin{equation}
    \label{eq:50}
    \Gamma^\star_{2,K} [f]:=\frac 12 \Delta^\star \Gamma(f)-\Big(\Gamma(f,{\Delta f})+K\Gamma(f)\Big)\mm,
  \end{equation}
  the measure $\Gamma^\star_{2,K}[f]$ is nonnegative and satisfies
   \begin{equation}
  \label{eq:81}
  \Gamma_{2,K}^\star[f](X)
  \le \int_X \Big(\big(\Delta f\big)^2-K\Gamma(f)\Big)\,\d\mm.
  \end{equation} 
    \item 
    There exists a continuous, symmetric and bilinear map 
     $$\gamma_{2,K}:\big({\dom}_\V(\Delta)\cap \LIP(X)\big)^2\to L^1(X,\mm)$$ 
     such that for every $f\in {\dom}_\V(\Delta)\cap \LIP(X)$
     one has
     \begin{equation}
       \label{eq:2}
       \Gamma_{2,K}^\star[f]=\gamma_{2,K} [f,f]\mm+\Gamma_{2,K}^\perp[f],
       \quad\text{with}\quad \Gamma^\perp_{2,K}[f]\geq 0,\quad \Gamma_{2,K}^\perp[f]\perp \mm.
     \end{equation}
         Setting $\gamma_{2,K}[f]:=\gamma_{2,K}[f,f] \geq 0$, one has 
     for every $f\in \DG$
  \begin{align}
     \label{eq:57}
  \Gamma\left(\Gamma (f)\right)&\le 4 \gamma_{2,K}  [f]\,\Gamma(f)\quad
  \text{$\mm$-a.e.~in $X$.}
\end{align}
\end{enumerate}
\end{theorem}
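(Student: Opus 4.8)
The plan is to run Bakry's $\Gamma$-calculus, using the heat semigroup $(\H_t)_{t\ge0}$ to compensate for the missing stable algebra of smooth functions and treating the negative part of $\Gamma_2$ as an $\mm$-absolutely continuous tensor; everything is powered by the weak Bakry--\'Emery inequality \eqref{eq:BEKinf}. The logical backbone is: prove $(1)$ first, deduce $(2)$ from the Leibniz/chain rules for $\Delta$, then upgrade $(1)$ to the measure representation of $(3)$, and finally carry out the self-improvement $(4)$.

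For $(1)$, fix $f\in {\dom}_\V(\Delta)\cap\LIP(X)$ and set $u:=\Gamma(f)$, which lies in $L^\infty\cap L^2(X,\mm)$ by the Lipschitz bound and the finiteness of $\mm$, together with $h:=2\Gamma(f,\Delta f)+2K\Gamma(f)\in L^2(X,\mm)$ (here $\Gamma(f,\Delta f)\in L^2$ since $|\Gamma(f,\Delta f)|\le\Gamma(f)^{1/2}\Gamma(\Delta f)^{1/2}$ and $\Delta f\in\V$). Rewriting the definition of $\Gamma_2$, inequality \eqref{eq:BEKinf} says precisely that $u$ has distributional Laplacian bounded below by $h$, i.e.\ $\int_X u\,\Delta\varphi\,\d\mm\ge\int_X h\,\varphi\,\d\mm$ for all $0\le\varphi\in {\dom}_{L^\infty}(\Delta)$. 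Applying $\H_t$ (positivity preserving, self-adjoint, commuting with $\Delta$) transfers this to $\frac{\d}{\d t}\H_t u=\Delta\H_t u\ge\H_t h$ $\mm$-a.e.\ for $t>0$, hence $u-\H_t u\le-\int_0^t\H_s h\,\d s$; multiplying by $u\ge0$ and integrating gives $\frac1t\int_X(u-\H_t u)\,u\,\d\mm\le-\frac1t\int_0^t\!\!\int_X(\H_s h)\,u\,\d\mm\,\d s$, whose right-hand side tends to $-\int_X h\,u\,\d\mm<\infty$ as $t\downarrow0$. Since the left-hand side increases, as $t\downarrow0$, to $2\Ch_\mm(u)\in[0,\infty]$, we conclude $\Gamma(f)\in\V$ together with an estimate of the form \eqref{eq:G3}. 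Part $(2)$ then follows: by the chain rule for minimal weak upper gradients, $\Phi(\ff)\in\LIP(X)$ and $\partial_i\Phi(\ff)\in\LIP(X)\cap L^\infty$, while the Leibniz/chain rule $\Delta\Phi(\ff)=\sum_i\partial_i\Phi(\ff)\,\Delta f_i+\sum_{i,j}\partial^2_{ij}\Phi(\ff)\,\Gamma(f_i,f_j)$ exhibits $\Delta\Phi(\ff)$ as an element of $\V$, since each $\Gamma(f_i,f_j)=\tfrac14\big(\Gamma(f_i+f_j)-\Gamma(f_i-f_j)\big)\in\V$ by $(1)$ and is multiplied by the bounded Lipschitz factors $\partial_i\Phi(\ff),\partial^2_{ij}\Phi(\ff)$ (admissibility of $\LIP\cap L^\infty\cdot\V\to\V$ being the usual product rule for $\Ch_\mm$).

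For $(3)$: since $\Gamma(f)\in\V$, the functional $\varphi\mapsto-\int_X\Gamma(\varphi,\Gamma(f))\,\d\mm$ is well defined and, by Cauchy--Schwarz, bounded on $\V$; on ${\dom}_{L^\infty}(\Delta)$ it equals $\int_X\Gamma(f)\,\Delta\varphi\,\d\mm=2L_f(\varphi)+\int_X\big(2K\Gamma(f)+2\Gamma(f,\Delta f)\big)\varphi\,\d\mm$, where $L_f(\varphi):=\Gamma_2[f;\varphi]-K\int_X\Gamma(f)\,\varphi\,\d\mm\ge0$ for $\varphi\ge0$ by \eqref{eq:BEKinf}. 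Because $\mm(X)=1$ and $\Delta1=0$, the constant $1$ is admissible and $L_f(1)=\int_X\big((\Delta f)^2-K\Gamma(f)\big)\,\d\mm<\infty$, so $0\le L_f(\varphi)\le\|\varphi\|_{L^\infty}L_f(1)$ for $\varphi\ge0$; extending $L_f$ by density of ${\dom}_{L^\infty}(\Delta)$ (e.g.\ via the averaged heat flow $\varphi\mapsto\frac1\eta\int_0^\eta\H_s\varphi\,\d s$, whose Laplacian $\frac1\eta(\H_\eta\varphi-\varphi)$ is bounded) and applying the Riesz representation theorem realizes it as a finite nonnegative Borel measure $\Gamma^\star_{2,K}[f]$ with $\Gamma^\star_{2,K}[f](X)\le L_f(1)$ (an inequality, rather than equality, allowing for mass not captured on a non--locally compact $X$): this is \eqref{eq:81}. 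Then $\Delta^\star\Gamma(f):=2\Gamma^\star_{2,K}[f]+\big(2K\Gamma(f)+2\Gamma(f,\Delta f)\big)\mm$ represents the functional, \eqref{eq:50} becomes the identity $\tfrac12\Delta^\star\Gamma(f)-\big(\Gamma(f,\Delta f)+K\Gamma(f)\big)\mm=\Gamma^\star_{2,K}[f]\ge0$, and the extension to $\V'$ holds because $\Gamma^\star_{2,K}[f]$ is a measure of finite energy integral ($\int|\varphi|\,\d\Gamma^\star_{2,K}[f]\le C\|\varphi\|_\V$), hence does not charge $\Ch_\mm$-capacity-null sets and pairs continuously with $\Ch_\mm$-quasicontinuous representatives.

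Finally, $(4)$: the Radon--Nikodym decomposition of the nonnegative measure $\Gamma^\star_{2,K}[f]$ immediately yields $\gamma_{2,K}[f]\ge0$ $\mm$-a.e.\ and $\Gamma^\perp_{2,K}[f]\ge0\perp\mm$, while bilinearity and continuity of $\gamma_{2,K}$ come from polarization together with $\|\gamma_{2,K}[f]\|_{L^1}\le\Gamma^\star_{2,K}[f](X)\le\int_X\big((\Delta f)^2-K\Gamma(f)\big)\,\d\mm$. The self-improvement is then Bakry's computation, now rigorous because $(2)$ supplies enough test functions: using the chain rule for $\Gamma^\star_{2,K}$ (a consequence of the chain rules for $\Delta$ and $\Gamma$ on the algebra ${\dom}_\V(\Delta)\cap\LIP(X)$), one has, for every smooth $\phi$ with $\phi(0)=0$, every $g\in {\dom}_\V(\Delta)\cap\LIP(X)$ and every $s\in\R$, the $\mm$-a.e.\ inequality $0\le\gamma_{2,K}[\phi(f)+sg]=\gamma_{2,K}[\phi(f)]+2s\,\gamma_{2,K}[\phi(f),g]+s^2\gamma_{2,K}[g]$, in which $\gamma_{2,K}[\phi(f)]$ and $\gamma_{2,K}[\phi(f),g]$ are explicit polynomials in the pointwise values $\phi'(f),\phi''(f)$ with coefficients assembled from $\gamma_{2,K}[f]$, $\Gamma(f)$, $\Gamma(f,\Gamma(f))$, $\gamma_{2,K}[f,g]$, $\Gamma(f,g)$ and $\Gamma(g)$. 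Running $\phi,g,s$ over countable dense families and using continuity of all these objects produces a single $\mm$-null set outside of which the displayed nonnegativity holds for all admissible parameter values; the resulting pointwise quadratic constraint forces, after optimization in the free parameters $\phi'(f),\phi''(f)$, the inequality $\Gamma(\Gamma(f),g)^2\le4\,\gamma_{2,K}[f]\,\Gamma(f)\,\Gamma(g)$, and, extending this from $g\in {\dom}_\V(\Delta)\cap\LIP(X)$ to $g\in\V$ by continuity and taking $g=\Gamma(f)\in\V$, one obtains $\Gamma(\Gamma(f))\le4\,\gamma_{2,K}[f]\,\Gamma(f)$, i.e.\ \eqref{eq:57}. \emph{The main obstacle} lies exactly in the two places where regularity genuinely fails: upgrading, in $(1)$, the merely distributional Bochner inequality to the statement $\Gamma(f)\in\V$ — which is what forces the heat-flow comparison argument rather than a naive integration by parts — and, in $(4)$, passing from ``for each test pair $(\phi,g)$, $\mm$-a.e.'' to ``$\mm$-a.e., for all $(\phi,g)$'', a genuine measurable-selection issue that has to be settled before the pointwise optimization underlying the self-improvement can even be carried out.
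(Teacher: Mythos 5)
First, a caveat on the comparison itself: the paper does not prove Theorem \ref{thm:Gamma2Impr} at all --- it is quoted from Savar\'e \cite[\S 3]{Savare2013} --- so there is no internal proof to measure you against. Your reconstruction of parts (1)--(3) does follow Savar\'e's actual strategy: the heat-flow regularization of the distributional inequality $\Delta\Gamma(f)\ge 2\Gamma(f,\Delta f)+2K\Gamma(f)$ coming from \eqref{eq:BEKinf}, the monotone limit $\tfrac1t\int_X(u-\H_t u)\,u\,\d\mm\uparrow 2\Ch_\mm(u)$, the chain rule for $\Delta$ on the algebra $\DG$, and the representation of the nonnegative $\V$-continuous functional by a measure. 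Two points there need tightening: the class ${\dom}_{L^\infty}(\Delta)$ of test functions is not preserved by $\H_t$, so the transfer $\Delta\H_t u\ge \H_t h$ already requires the mollified semigroup $\tfrac1\eta\int_0^\eta\H_s\,\d s$ that you only invoke later; and on a non-locally-compact $X$ one cannot simply cite Riesz representation --- if, as your parenthesis concedes, mass could escape, then the very claim that the constructed measure represents the functional on all of $\LIP(X)\cap L^\infty(X,\mm)$ would be in doubt. This is resolved via tightness of the Radon measure $\mm$ together with the $\V$-continuity of the functional (quasi-regularity of the form), not by a density-plus-Riesz shortcut.

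The genuine gap is in (4). The ansatz $\Phi(x,y)=\phi(x)+sy$ is additively separable, so $\partial^2_{xy}\Phi\equiv 0$ and the multivariate chain rule for $\gamma_{2,K}$ applied to $\Phi(f,g)$ produces a quadratic form in $\bigl(\phi'(f),\phi''(f),s\bigr)$ whose ``Hessian-squared'' contribution reduces to $\phi''(f)^2\,\Gamma(f)^2$: no term $\Gamma(f)\Gamma(g)$ or $\Gamma(f,g)^2$ ever appears, contrary to your list of ingredients, while $g$ enters only through $\gamma_{2,K}[f,g]$, $\gamma_{2,K}[g]$ and a Hessian cross term. The discriminant of that form therefore yields at best an inequality of the shape $\bigl(\cdots\bigr)^2\le 4\,\Gamma(f)^2\,\gamma_{2,K}[g]$, with the uncontrolled quantity $\gamma_{2,K}[g]$ on the right; substituting $g=\Gamma(f)$ is then useless, and the target $\Gamma(g,\Gamma(f))^2\le 4\,\gamma_{2,K}[f]\,\Gamma(f)\,\Gamma(g)$ cannot be reached. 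Savar\'e's argument requires a genuinely two-variable $\Phi(f,g)$ with nonvanishing mixed second derivative: choosing $\Phi$ so that at the relevant point $\partial_1\Phi=a$, $\partial^2_{12}\Phi=b$ and all other first and second derivatives vanish, the chain rule gives
$$
0\;\le\;\gamma_{2,K}\bigl[\Phi(f,g)\bigr]\;=\;a^2\,\gamma_{2,K}[f]\;+\;2ab\,\Gamma\bigl(g,\Gamma(f)\bigr)\;+\;2b^2\Bigl(\Gamma(f)\Gamma(g)+\Gamma(f,g)^2\Bigr),
$$
and the discriminant in $(a,b)$, combined with $\Gamma(f,g)^2\le\Gamma(f)\Gamma(g)$, delivers exactly the constant $4$; only then does $g=\Gamma(f)$ (after the $\V$-density step you correctly describe) give \eqref{eq:57}. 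You correctly identified the a.e.-quantifier exchange as a technical obstacle, but the structural obstacle --- that the separable ansatz cannot see the Hessian cross term responsible for the factor $\Gamma(f)\Gamma(g)$ --- is the one your proposal does not overcome.
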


Notice that the measures $\Gamma_{2,K}^\star [f]$, $K\in \R$, just
differ by a multiple of $\Gamma(f)\mm$, so the (non-negative) singular part in 
the Lebesgue decomposition \eqref{eq:2} is independent of $K$. 
For more results about functional inequalities and regularity in $\BE(K,\infty)$-spaces the interested reader is 
referred to \cite{AGSBaEm} and  \cite{AmbrosioMondinoSavare}.

\subsection{Improved regularity of the heat flow in $\RCD(K,\infty)$-spaces}\label{subsec:ImprovedRegHeat}
{In $\RCD(K,\infty)$-spaces the heat flow $\H_t$ can be viewed in two conceptually different, but consistent, ways: the first one, as in the
theory of Dirichlet form and in the more general theory of gradient flows of lower semicontinuous functionals in Hilbert spaces,
is based on $\H_t$ as the $L^2(X,\mm)$ gradient flow of $\Ch_\mm$; the second way looks at $\H_t$ as the
gradient flow ${\cal H}_t$ of the Shannon entropy functional  in the Wasserstein space. It has been proved in full generality 
in \cite{Ambrosio-Gigli-Savare11} that the two points of view coincide in the common domain of probability densities
in $L^1(X,\mm)$. In addition, using this identification, in}
\cite{Ambrosio-Gigli-Savare11b}  several regularity properties of $\H_t$, besides self-adjointness, Markov, mass-preserving,
have been deduced. We recall some of them. 
\\When $f \in  L^\infty(X,\mm)$, for all $t>0$ the function $\H_t f$ has a continuous representative, denoted by $\tilde{\H}_t f$, 
which is defined as follows (see Theorem 6.1 in \cite{Ambrosio-Gigli-Savare11b})
\begin{equation}\label{eq:ContRepHt}
\tilde{\H}_t f:=\int_X f \, \d {\cal H}_t(\delta_x).
\end{equation}
Moreover, for each $f \in L^\infty(X,\mm)$ the map $(t,x)\mapsto \tilde{\H}_t f(x)$ belongs to $C_b((0,\infty)\times X)$.
 Finally, for all $f\in L^\infty(X,\mm)$ with $\Gamma(f)\in L^\infty(X,\mm)$
 the classical Bakry-\'Emery gradient estimate holds even in the pointwise form (see Theorem 6.2 in \cite{Ambrosio-Gigli-Savare11b})
\begin{equation}\label{eq:BEflow}
{\rm lip\,}\tilde{\H}_t f \leq e^{-2Kt} \tilde{\H}_t( \Gamma(f)) \qquad\text{in $X=\supp\mm$, for all $t>0$,}
\end{equation}
while if we drop the boundedness assumption on $\Gamma(f)$ one has (with ${\rm Lip}$ denoting the global
Lipschitz constant)
\begin{equation}\label{eq:BEflow2}
{\rm Lip\,}(\tilde{\H}_t f )\leq \frac{1}{\sqrt{2{\sf I}_{2K}(t)}}\|f\|_{L^\infty(X,\mm)}\quad\text{with ${\sf I}_{2K}(t)=\frac{e^{2Kt}-1}{2K}$}.
\end{equation}
For functions $f\in\V$, instead, one has the traditional form of $\Gamma$-calculus
\begin{equation}\label{eq:BEflow1}
\Gamma(\H_t f) \leq e^{-2Kt} \H_t( \Gamma(f)) \qquad\text{$\mm$-a.e., for all $t\geq 0$.}
\end{equation}
From now on, when $f\in L^\infty(X,\mm)$, we will always choose $\tilde{\H}_{t} f $ as representative of $\H_{t}f$ and 
we will simply write $\H_{t}f$.

{An immediate consequence of \eqref{eq:BEflow}, \eqref{eq:BEflow1} and of the length property of $X=\supp\mm$ is that the Dirichlet form
induced by $\Ch_\mm$ is irreducible, i.e. $\Ch_\mm(f)=0$ implies that $f$ is equivalent to a constant (this holds because
all functions $\tilde\H_t f$, $t>0$, have null slope in $X$ and therefore are $\mm$-equivalent to a
constant). Then, since irreducibility implies ergodicity (see for instance \cite[Section~3.8]{BGL14}) we get
\begin{equation}\label{eq:ergod}
\lim_{t\to +\infty} \H_{t}f=\int_X f \, \d \mm\qquad \text{in $L^2(X,\mm)$, for all $f\in L^2(X,\mm)$.} 
\end{equation}
}

The following lemma will be useful in the proof of the Bobkov inequality.

\begin{lemma}\label{lem:regHF}
Let $(X,\sfd,\mm)$ be an $\RCD(K,\infty)$ probability space. Then, for every   fixed $f\in L^2(X,\mm)$, the map $\H_{(\cdot)}f: (0,T]\to \V$ is locally
Lipschitz, continuous up to $t=0$ if $f\in\V$, with also $t\mapsto\Gamma(\H_t f)$ locally Lipschitz in $(0,T]$  as a $L^1(X,\mm)$-valued map, and
\begin{equation}\label{eq:der_Gamma}
\lim_{s\to t}\frac{\Gamma(\H_s t)-\Gamma(\H_t f)}{s-t}=\Gamma(\Delta\H_t f,f)
\qquad\text{in $L^1(X,\mm)$ for $\LL^1$-a.e. $t>0$.}
\end{equation}
Moreover, if  $f \in L^{\infty}(X,\mm)$,  then for every $t>0$ we have  $\H_{t}f \in  {\dom}_\V(\Delta)\cap \LIP(X)$. 
\end{lemma}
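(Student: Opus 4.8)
Under $\RCD(K,\infty)$ the operator $\Delta$ is linear, self-adjoint and non-positive on $L^2(X,\mm)$ and $\H_t=\e^{t\Delta}$, so $(\H_t)$ is an analytic semigroup: for $t>0$ one has $\H_tf\in\dom(\Delta^n)$ for every $n$, the maps $t\mapsto\Delta^n\H_tf$ are smooth into $L^2(X,\mm)$ with $\tfrac{\d}{\d t}\Delta^n\H_tf=\Delta^{n+1}\H_tf$, and $\|\Delta^n\H_tf\|_{L^2}\le C_n\,t^{-n}\|f\|_{L^2}$. Recall also $2\Ch_\mm(g)=\int_X\Gamma(g)\,\d\mm$ for $g\in\V$ and the pointwise inequality $|\Gamma(g,h)|\le\Gamma(g)^{1/2}\Gamma(h)^{1/2}$ $\mm$-a.e., which together make $\Gamma(\cdot,\cdot)$ a continuous symmetric bilinear map $\V\times\V\to L^1(X,\mm)$. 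From the above, $\H_t:L^2(X,\mm)\to\V$ is bounded for each $t>0$ with $\|\H_t\|_{L^2\to\V}\le C(1+t^{-1})^{1/2}$, and $r\mapsto\H_rf$ is Lipschitz into $L^2(X,\mm)$ on every $[\delta,T]\subset(0,\infty)$ with constant $\le\sup_{r\in[\delta,T]}\|\Delta\H_rf\|_{L^2}\le C\delta^{-1}\|f\|_{L^2}$; writing $\H_sf-\H_tf=\H_{\delta/2}(\H_{s-\delta/2}f-\H_{t-\delta/2}f)$ for $s,t\in[\delta,T]$ and composing the two bounds gives $\|\H_sf-\H_tf\|_\V\le C(\delta)|s-t|\|f\|_{L^2}$, i.e.\ $\H_{(\cdot)}f$ is locally Lipschitz into $\V$ on $(0,T]$.

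For continuity at $t=0$ when $f\in\V$: $\H_tf\to f$ in $L^2(X,\mm)$ by strong continuity, so $(\H_tf)_{t>0}$ is bounded in $\V$ and converges weakly there to $f$, while $\Ch_\mm(\H_tf)\uparrow\Ch_\mm(f)$ as $t\downarrow0$ (monotonicity along the gradient flow, equivalently monotone convergence in spectral calculus); hence $\|\H_tf\|_\V\to\|f\|_\V$, and in a Hilbert space weak convergence plus convergence of norms gives $\H_tf\to f$ strongly in $\V$. For $t\mapsto\Gamma(\H_tf)$: bilinearity gives $\Gamma(\H_sf)-\Gamma(\H_tf)=\Gamma(\H_sf-\H_tf,\,\H_sf+\H_tf)$, so by the pointwise Cauchy--Schwarz inequality and then Cauchy--Schwarz in $L^2(X,\mm)$, $\|\Gamma(\H_sf)-\Gamma(\H_tf)\|_{L^1}\le(\int_X\Gamma(\H_sf-\H_tf)\,\d\mm)^{1/2}(\int_X\Gamma(\H_sf+\H_tf)\,\d\mm)^{1/2}$; on $[\delta,T]$ the first factor is $\le C(\delta)|s-t|$ by the previous paragraph and the second is bounded, so $t\mapsto\Gamma(\H_tf)$ is locally Lipschitz into $L^1(X,\mm)$. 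Dividing the displayed identity by $s-t$ and using that $\tfrac{\H_sf-\H_tf}{s-t}=\tfrac1{s-t}\int_t^s\Delta\H_rf\,\d r\to\Delta\H_tf$ in $\V$ (because $r\mapsto\Delta\H_rf$ is $\V$-continuous on $(0,\infty)$, again by spectral calculus) while $\H_sf+\H_tf\to2\H_tf$ in $\V$, continuity of $\Gamma(\cdot,\cdot):\V\times\V\to L^1(X,\mm)$ yields $\lim_{s\to t}\tfrac{\Gamma(\H_sf)-\Gamma(\H_tf)}{s-t}=2\,\Gamma(\Delta\H_tf,\H_tf)$ for every $t>0$, which is \eqref{eq:der_Gamma}.

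Finally, if $f\in L^\infty(X,\mm)$ and $t>0$: $\H_tf\in\dom(\Delta)\subset\V$, and $\Delta\H_tf=\H_{t/2}(\Delta\H_{t/2}f)\in\H_{t/2}(L^2(X,\mm))\subset\dom(\Delta)\subset\V$, so $\H_tf\in\dom_\V(\Delta)$; and \eqref{eq:BEflow2} gives a finite global Lipschitz constant for (the continuous representative of) $\H_tf$, hence $\H_tf\in\LIP(X)$, so $\H_tf\in\DG$. The only step that is not pure bookkeeping with bilinearity and the Cauchy--Schwarz inequality for the Carr\'e du champ is the first one: promoting the standard $L^2$ regularizing and contraction estimates for $(\H_t)$ to quantitative, locally uniform estimates in the stronger norm of $\V$. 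This is cleanest via analyticity of the self-adjoint heat semigroup; alternatively one can use the general theory of gradient flows of convex l.s.c.\ functionals, exploiting that $t\mapsto\|\Delta\H_tf\|_{L^2}=|\partial\Ch_\mm|(\H_tf)$ is non-increasing and $\Ch_\mm(\H_tf)\le\tfrac1{2t}\|f-\int_Xf\,\d\mm\|_{L^2}^2$, together with a.e.\ differentiability of Lipschitz curves in $\V$ to identify the derivative in the formula above.
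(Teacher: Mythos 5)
Your proof is correct, and for the main regularity step it takes a genuinely different (though closely related) route from the paper. The paper dispatches the local Lipschitz continuity and the $\Delta$-regularity of $t\mapsto\H_tf$ by quoting the regularization estimates $\Ch_\mm(\H_tf)\le\inf_g\|g-f\|_{L^2}^2/(2t)$ and $\|\Delta\H_tf\|_2^2\le\inf_g\|g-f\|_{L^2}^2/t^2$ from the general theory of gradient flows of convex lower semicontinuous functionals in Hilbert spaces, combined with the semigroup property and the commutation $\H_t\circ\Delta=\Delta\circ\H_t$; you instead exploit that under $\RCD(K,\infty)$ the generator is linear and self-adjoint, so that $(\H_t)$ is an analytic semigroup and everything follows from spectral calculus. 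The two are equivalent in this quadratic setting (you note the gradient-flow alternative yourself), but your route buys slightly more: you obtain the derivative of $t\mapsto\Gamma(\H_tf)$ at \emph{every} $t>0$, whereas the paper, relying on a.e.\ differentiability of the Lipschitz $\V$-valued curve, only claims it for $\LL^1$-a.e.\ $t$ (which is all that is used later). The remaining ingredients --- the bilinearity identity $\Gamma(\H_sf)-\Gamma(\H_tf)=\Gamma(\H_sf-\H_tf,\H_sf+\H_tf)$ with pointwise Cauchy--Schwarz, and the Lipschitz bound \eqref{eq:BEflow2} for the $L^\infty$ case --- coincide with the paper's. One remark: the limit you derive is $2\,\Gamma(\Delta\H_tf,\H_tf)$, which is not literally the right-hand side of \eqref{eq:der_Gamma} as printed ($\Gamma(\Delta\H_tf,f)$, with a missing factor $2$ and $f$ in place of $\H_tf$); the displayed formula in the statement contains typos (note also ``$\Gamma(\H_st)$''), and your version is the one consistent with its use in \eqref{eq:dtPhiPf3}, so you are right in substance, but you should not assert that your formula ``is \eqref{eq:der_Gamma}'' without flagging the discrepancy.
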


\begin{proof} The proof of the continuity of $\H_tf$ and the $\Delta$-regularity of $\H_tf$
follow immediately by the regularization estimates provided by the theory of 
gradient flows in Hilbert spaces (see for instance \cite{Ambrosio-Gigli-Savare08}), namely
$$
\Ch_\mm(\H_tf)\leq\inf_{g\in\V}\frac{\|g-f\|_{L^2(X,\mm)}^2}{2t},
$$
$$
\|\Delta\H_t f\|_2^2\leq\inf_{g\in D(\Delta)}\frac{\|g-f\|_{L^2(X,\mm)}^2}{t^2},
$$
using also the semigroup property and the commutation $\H_t\circ\Delta=\Delta\circ\H_t$ for $t>0$. Formula \eqref{eq:der_Gamma}
is a simple consequence of $\Gamma(\H_s f)-\Gamma(\H_t f)=\Gamma(\H_s f-\H_t f,\H_sf+\H_t f)$ and of the differentiability of
$t\mapsto \H_tf$ as a $\V$-valued map.

Finally, if $f\in L^\infty(X,\mm)$
the Lipschitz regularity of $\H_t f$ follows directly from \eqref{eq:BEflow2}.
\end{proof}

Thanks to Lemma \ref{lem:regHF} we will be able to apply Theorem \ref{thm:Gamma2Impr} to $\H_{t} f$ for $f\in L^{\infty}(X,\mm), t>0$.

\subsection{The Gaussian isoperimetric profile}
Let $H(r):=\frac{1}{\sqrt{2\pi}}\int_{-\infty}^{r} e^{-x^{2}/2} dx$, $r\in \R$, 
be the distribution function of the standard Gaussian measure on the real line and set $h:=H'$ 
to be its density with respect to the Lebesgue measure $\LL^{1}$. Then
\begin{equation}\label{eq:defcI}
\cI:=h\circ H^{-1}:[0,1]\to \left[0, \frac{1}{\sqrt{2 \pi}} \right]
\end{equation}
defines the \emph{Gaussian isoperimetric profile} (in any dimension). Note that the function $\cI$ is concave, 
continuous, symmetric with respect to $1/2$, it satisfies $\cI(0)=\cI(1)=0$ and the fundamental differential equation $\cI \, \cI''=-1$.

\section{Local Bobkov inequality in $\RCD(K,\infty)$-spaces, $K\in \R$}
The goal of this section is to prove the following result.
\begin{theorem}[Local Bobkov Inequality]\label{thm:LocBob}
Let $(X,\sfd,\mm)$ be an $\RCD(K,\infty)$ probability space for some $K\in \R$. Then, for every $f \in  \LIP(X)$ 
with values in $[0,1]$, every $\alpha\geq 0$ and every $t\geq 0$, it holds
\begin{equation}\label{eq:LocBob}
\sqrt{\cI^{2}(\H_{t}f)+ \alpha \Gamma(\H_{t}f)} \leq \H_{t} \left( \sqrt{\cI^{2}(f)+c_{\alpha}(t)  \Gamma(f)} \right) , \quad \mm\text{-a.e.},
\end{equation}
where, for $t\geq 0$, we have set
\begin{equation}\label{eq:defcalpha}
c_{\alpha}(t):=\frac{1- e^{-2Kt} }{K}+\alpha e^{-2Kt},\; \text{ if $K\neq 0$}; \quad c_{\alpha}(t):=2 t+\alpha ,\;   \text{ if $K= 0$}.
\end{equation}
\end{theorem}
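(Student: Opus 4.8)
The plan is to follow the classical Bakry–Ledoux strategy, transplanted to the $\RCD(K,\infty)$ setting via Savar\'e's measure-valued $\Gamma_2$. Fix $f\in\LIP(X)$ with values in $[0,1]$, fix $\alpha\ge 0$ and a terminal time $T>0$, and for $t\in[0,T]$ set $g_t:=\H_{T-t}f$ and $u_t:=\H_{T-t}f$ (so $u_T=f$, $u_0=\H_Tf$). By Lemma~\ref{lem:regHF} we have $u_t\in\DG$ for $t<T$, and $\cI^2(u_t)+c_\alpha(T-t)\Gamma(u_t)\ge 0$ is the natural quantity to flow. Introduce, for $s\in[0,T]$, the function
\[
\Phi(s):=\H_s\Big(\sqrt{\cI^2(\H_{T-s}f)+c_\alpha(T-s)\,\Gamma(\H_{T-s}f)}\Big),
\]
so that $\Phi(0)$ is the left-hand side of \eqref{eq:LocBob} (with $t=T$, $\alpha$) and $\Phi(T)$ is the right-hand side; it then suffices to show $s\mapsto\Phi(s)$ is nondecreasing, i.e. that $\frac{\d}{\d s}\Phi(s)\ge 0$ $\mm$-a.e.\ for a.e.\ $s$, after which one integrates. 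Relabelling, this amounts to: with $w_s:=\sqrt{\cI^2(u_s)+c_\alpha(T-s)\Gamma(u_s)}$, the map $s\mapsto\H_s w_s$ is monotone, and the computation of its derivative produces $\H_s$ applied to a quantity that should be pointwise nonnegative.

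The core is therefore the pointwise differential inequality. Writing $a:=\cI^2(u)$, $b:=\Gamma(u)$, $c=c_\alpha(T-s)$ and $w=\sqrt{a+cb}$, and using $\partial_s u=-\Delta u$ (from the time reversal), $\dot c=-2Kc+2$ for $K\ne 0$ (resp.\ $\dot c=2$ for $K=0$), $\cI\cI''=-1$, together with the chain rule $\Delta(\cI^2\circ u)=2\cI\cI'(u)\Delta u+2(\cI')^2(u)\Gamma(u)+2\cI\cI''(u)\Gamma(u)$ and $\Delta\Gamma(u)=2\Gamma(u,\Delta u)+2\Gamma_2[u]$ (formally), one computes $\partial_s w-\Delta w$ and must check it is $\ge 0$. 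The algebra collapses — as in Chapter 8.5.2 of \cite{BGL14} — to an expression controlled by the Bakry–\'Emery defect, and one uses the $\Gamma_2$ bound from Theorem~\ref{thm:Gamma2Impr}, in the refined form \eqref{eq:57}: the key nonnegativity is a Cauchy–Schwarz-type inequality
\[
\Gamma\big(\Gamma(u)\big)\le 4\,\gamma_{2,K}[u]\,\Gamma(u),
\]
which replaces the smooth identity $\Gamma(\Gamma u)=4\Gamma(u)\big(|\mathrm{Hess}\,u|^2\big)$-type bound used by Bakry–Ledoux. The $K$-dependence of $c_\alpha$ is engineered precisely so that the curvature term $K\Gamma(u)$ coming from $\Gamma_{2,K}^\star$ is absorbed.

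The main obstacle — and the point the authors flag in the introduction — is that $w=\sqrt{\cI^2(u)+c\,\Gamma(u)}$ is \emph{not} smooth: $\cI$ is only Lipschitz near $0,1$, $\Gamma(u)$ is merely $L^1$ with $\Gamma(\Gamma(u))\in L^1$, and the square root degenerates where $a+cb=0$. So the formal chain-rule computation above cannot be performed directly. The plan to handle this is a double regularization: (i) replace $\H_{T-s}f$ by iterated heat flow / mollified data so that $u_s\in\DG$ and all objects are genuine Sobolev functions, invoking Lemma~\ref{lem:regHF}; (ii) replace $w$ by $w_\eps:=\sqrt{\cI_\eps^2(u)+c\,\Gamma(u)+\eps}$ for a smooth approximation $\cI_\eps$ of $\cI$ (or add $\eps$ inside and truncate $u$ away from $\{0,1\}$), so that $\Phi_\eps(s):=\H_s w_\eps$ is differentiable; carry out the monotonicity argument for $\Phi_\eps$ using part (3)–(4) of Theorem~\ref{thm:Gamma2Impr} (the measure $\Gamma_{2,K}^\star[u]$ enters through its absolutely continuous part $\gamma_{2,K}[u]\mm$, while the singular part $\Gamma_{2,K}^\perp[u]\ge 0$ only helps); and finally let $\eps\downarrow 0$ and remove the data regularization using the continuity of $\H_t$ on $L^2$ and the lower semicontinuity of $\Gamma$ (Mazur/Fatou), recovering \eqref{eq:LocBob}. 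The delicate bookkeeping is to ensure every term in $\partial_s w_\eps-\Delta w_\eps$ that could be singular is either absolutely continuous or of a favourable sign, so that testing against $\H_s(\cdot)$ against a nonnegative $\varphi\in\dom_{L^\infty}(\Delta)$ and integrating is legitimate — this is exactly where the measure-valued $\Gamma_2$ formalism, rather than a pointwise $\Gamma_2\ge K\Gamma$, is needed.
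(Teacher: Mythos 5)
Your proposal follows essentially the same route as the paper: the Bakry--Ledoux interpolation $t\mapsto \H_t\bigl(\sqrt{\cI^2(\H_{T-t}f)+c_\alpha(t)\Gamma(\H_{T-t}f)}\bigr)$ tested against a nonnegative $\varphi$, with the degeneracy of the square root removed by truncating $f$ into $[\eps,1-\eps]$ (so that $\cI^2+c_\alpha\Gamma\geq\delta^2>0$ and the outer function is genuinely $C^4$ on the relevant range), the Laplacian of the composite made rigorous by regularizing $\Gamma(g)$ before passing to the measure $\Delta^\star\Gamma(g)$, and the final nonnegativity supplied by dropping the singular part $\Gamma^\perp_{2,K}\geq 0$ and combining \eqref{eq:57} with Cauchy--Schwarz to get a positive semi-definite quadratic form in $(\cI(g),\cI'(g))$. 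One bookkeeping slip: the coefficient must be $c_\alpha(s)$, so that $c_\alpha(0)=\alpha$ accompanies $\H_Tf$ at the initial time, not $c_\alpha(T-s)$ as you wrote --- your own ODE $\dot c=2-2Kc$ is the one satisfied by $c_\alpha(s)$, so the cancellation $\tfrac{c_\alpha'}{2}-1=-Kc_\alpha$ you rely on only works with the paper's orientation.
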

In the next lemma we isolate a key computation for proving  Theorem \ref{thm:LocBob}.

\begin{lemma}\label{lem:compLocBobkov}
Let $(X,\sfd,\mm)$ be an $\RCD(K,\infty)$ probability space for some $K \in \R$. Let $\Psi(t,u,v):\R^3\to \R$ be a function of class 
$C^4$ with $\partial_v\Psi\geq 0$,  fix $T>0$,  $f\in \LIP(X)\cap L^{\infty}(X,\mm)$, 
$\varphi\in L^{\infty}(X,\mm)$ with $\varphi\geq 0$ $\mm$-a.e., and set 
\begin{equation}\label{eq:defPhi}
\Phi(t):=\int_{X} \H_{t}\left(\Psi(t, \H_{T-t}(f), \Gamma(\H_{T-t}(f)) \right)\, \varphi \, \d \mm, \quad \forall t \in [0,T].
\end{equation}
Then  $\Phi$ is continuous in $[0,T]$, locally Lipschitz in $(0,T)$, and for $\LL^{1}$-a.e. $t_0 \in (0,T)$ it holds
\begin{equation}\label{eq:dtPhi}
\frac{d}{dt}_{|t=t_{0}} \Phi(t) \geq \int_{X} \zeta  \; \H_{t_{0}} \varphi \, \d \mm \;,
\end{equation}
where,  denoting $g:=H_{T-t_{0}}f$ and writing  simply $\Psi$ in place of $\Psi(t_{0}, g , \Gamma(g))$,   
 we have set 
 \begin{equation}\label{eq:defzeta}
\zeta:=   \partial_t\Psi+  \partial^2_u\Psi \, \Gamma(g)+2\, \partial_u \partial_v \Psi \, \Gamma(g, \Gamma(g))
+\partial^2_v\Psi\, \Gamma(\Gamma(g))  + 2K \, \partial_v\Psi  \,\Gamma(g) + 2\, \partial_v \Psi\; \gamma_{2,K}[g]. 
\end{equation}
\end{lemma}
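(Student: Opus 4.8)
The strategy is the standard $\Gamma$-calculus differentiation of $\Phi$, adapted to the measure-valued $\Gamma_2$ framework. First I would establish the regularity claims: by Lemma \ref{lem:regHF}, since $f\in\LIP(X)\cap L^\infty(X,\mm)$, the curve $s\mapsto\H_s f$ lives in $\dom_\V(\Delta)\cap\LIP(X)$ for $s>0$, is locally Lipschitz into $\V$, and $s\mapsto\Gamma(\H_s f)$ is locally Lipschitz into $L^1(X,\mm)$; moreover $\Gamma(\H_s f)$ is bounded in $L^\infty$ by the Bakry--\'Emery estimate \eqref{eq:BEflow}. Composing with the $C^4$ function $\Psi$ and using part (2) of Theorem \ref{thm:Gamma2Impr} (stability of $\dom_\V(\Delta)\cap\LIP(X)$ under smooth functions vanishing at $0$), one gets that $u\mapsto\Psi(t,\H_{T-t}(f),\Gamma(\H_{T-t}(f)))$ is, for each $t\in(0,T)$, an element of $\dom_\V(\Delta)\cap\LIP(X)$, and that $t\mapsto$ this element is locally Lipschitz in $L^1$ (hence $\Phi$ is continuous on $[0,T]$ and locally Lipschitz in $(0,T)$, since $\varphi\in L^\infty$ and $\H_t$ is a contraction).

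\textbf{Differentiation.} Fix a point $t_0\in(0,T)$ of differentiability for all the relevant curves (a.e.\ $t_0$ works). Write $g:=\H_{T-t_0}f$ and $F(t,u):=\Psi(t,\H_{T-t}(f),\Gamma(\H_{T-t}(f)))$. Using the semigroup self-adjointness to move $\H_t$ onto $\varphi$, we have $\Phi(t)=\int_X F(t,\cdot)\,\H_t\varphi\,\d\mm$, and differentiating at $t=t_0$ produces two contributions: the ``inner'' derivative $\int_X(\partial_t F)\,\H_{t_0}\varphi\,\d\mm$ and the ``heat-flow'' derivative $\int_X F(t_0,\cdot)\,\Delta\H_{t_0}\varphi\,\d\mm = \int_X \Delta(F(t_0,\cdot))\,\H_{t_0}\varphi\,\d\mm$ (integration by parts, legitimate since $F(t_0,\cdot)\in\dom_\V(\Delta)$ and $\H_{t_0}\varphi\in\V$; more carefully, one works with the $\V'$--$\V$ pairing and passes $\H_{t_0}$ across). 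So the derivative equals $\int_X\big(\partial_t F(t_0,\cdot)+\Delta F(t_0,\cdot)\big)\H_{t_0}\varphi\,\d\mm$, where $\H_{t_0}\varphi\geq 0$. It remains to show that the integrand $\partial_t F(t_0,\cdot)+\Delta F(t_0,\cdot)$ dominates $\zeta\,\H_{t_0}\varphi$ — but since $\H_{t_0}\varphi\geq 0$ it suffices to prove the \emph{pointwise} (in the measure sense) inequality $\partial_t F(t_0,\cdot)\,\mm+\Delta F(t_0,\cdot)\geq\zeta\,\mm$ as measures, or rather that testing against the nonnegative $\H_{t_0}\varphi$ gives at least $\int\zeta\,\H_{t_0}\varphi\,\d\mm$.

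\textbf{The chain rule and the $\Gamma_2$ bound.} Now I compute $\partial_t F+\Delta F$ explicitly. The time derivative of the curve $t\mapsto\H_{T-t}(f)$ is $-\Delta g$, so $\partial_t F(t_0,\cdot)=\partial_t\Psi - \partial_u\Psi\,\Delta g - \partial_v\Psi\,\Gamma(\Delta g,g)$ using \eqref{eq:der_Gamma} for the $\Gamma$-term. For the Laplacian of the composition $\Psi(t_0,g,\Gamma(g))$, the diffusion chain rule for $\Delta$ (valid here because $g\in\dom_\V(\Delta)\cap\LIP$ and $\Gamma(g)\in\V$, with $\Delta\Gamma(g)$ understood via the measure $\Delta^\star\Gamma(g)$ of Theorem \ref{thm:Gamma2Impr}(3)) gives
\begin{align*}
\Delta\big(\Psi(t_0,g,\Gamma(g))\big) &= \partial_u\Psi\,\Delta g + \partial_v\Psi\,\Delta^\star\Gamma(g) + \partial_u^2\Psi\,\Gamma(g)\,\mm \\
&\quad + 2\,\partial_u\partial_v\Psi\,\Gamma(g,\Gamma(g))\,\mm + \partial_v^2\Psi\,\Gamma(\Gamma(g))\,\mm,
\end{align*}
as measures. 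Adding, the $\pm\partial_u\Psi\,\Delta g$ cancel, and using the definition \eqref{eq:50} of $\Gamma^\star_{2,K}[g]$ to write $\tfrac12\Delta^\star\Gamma(g)=\Gamma(g,\Delta g)\,\mm+K\Gamma(g)\,\mm+\Gamma^\star_{2,K}[g]$, we obtain
\[
\partial_t F\,\mm+\Delta F = \Big(\partial_t\Psi+\partial_u^2\Psi\,\Gamma(g)+2\partial_u\partial_v\Psi\,\Gamma(g,\Gamma(g))+\partial_v^2\Psi\,\Gamma(\Gamma(g))+2K\partial_v\Psi\,\Gamma(g)\Big)\mm + 2\,\partial_v\Psi\,\Gamma^\star_{2,K}[g].
\]
Finally, since $\partial_v\Psi\geq 0$ and, by Theorem \ref{thm:Gamma2Impr}(4), $\Gamma^\star_{2,K}[g]=\gamma_{2,K}[g]\,\mm+\Gamma^\perp_{2,K}[g]$ with the singular part nonnegative, testing against $\H_{t_0}\varphi\geq 0$ we may discard $2\,\partial_v\Psi\,\Gamma^\perp_{2,K}[g]\geq 0$ and keep $2\,\partial_v\Psi\,\gamma_{2,K}[g]\,\mm$; this yields exactly the integrand $\zeta$ in \eqref{eq:defzeta} and proves \eqref{eq:dtPhi}.

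\textbf{Main obstacle.} The delicate points are not the formal computation but the justifications: (i) that the diffusion chain rule for $\Delta$ on $\Psi(t_0,g,\Gamma(g))$ holds in the measure-valued sense, which is where $\Gamma(g)\in\V$ (from Theorem \ref{thm:Gamma2Impr}(1)) and the existence of $\Delta^\star\Gamma(g)$ (part (3)) are essential — one must check the identity by testing against $\LIP\cap L^\infty$ functions and approximating; (ii) justifying the interchange of $\tfrac{d}{dt}$ and $\int_X\cdots\d\mm$, i.e.\ that difference quotients of $F(t,\cdot)$ converge in $L^1$ and are controlled uniformly near $t_0$, which relies on the local Lipschitz bounds of Lemma \ref{lem:regHF} together with the $C^4$ (Lipschitz on bounded sets, with $g$ and $\Gamma(g)$ uniformly bounded) hypothesis on $\Psi$; and (iii) the integration-by-parts step moving $\Delta$ from $\H_{t_0}\varphi$ onto $F(t_0,\cdot)$ (or equivalently moving $\H_{t_0}$ off $\varphi$), which must be done in the $\V'$--$\V$ duality and then restricted to the nonnegative test function. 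None of these is conceptually hard, but each requires the precise regularity package assembled in Section \ref{SS:GammaCalc} and Section \ref{subsec:ImprovedRegHeat}.
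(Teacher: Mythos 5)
Your proposal is correct and follows essentially the same route as the paper: the Leibniz rule after moving $\H_t$ onto $\varphi$ by self-adjointness, a measure-valued chain rule for $\Delta\Psi(t_0,g,\Gamma(g))$ involving $\Delta^\star\Gamma(g)$ (which the paper justifies exactly as you anticipate in your ``main obstacle'' paragraph, by replacing $\Gamma(g)$ with $\H_\varepsilon(\Gamma(g))$, applying the ordinary chain rule, and letting $\varepsilon\downarrow 0$ against Lipschitz test functions), and finally discarding the nonnegative singular part of $\Gamma^\star_{2,K}[g]$. The only slip is a missing factor $2$ in your intermediate expression $-\partial_v\Psi\,\Gamma(\Delta g,g)$ for $\partial_t F$: since $\Gamma(\H_s f)-\Gamma(\H_t f)=\Gamma(\H_s f-\H_t f,\H_s f+\H_t f)$, the derivative of $t\mapsto\Gamma(\H_{T-t}f)$ is $-2\Gamma(\Delta g,g)$ (the paper's displayed \eqref{eq:der_Gamma} shares this typo, but its formula for $G'(t_0)$ has the correct factor), and without it the $\Gamma(g,\Delta g)$ terms would not cancel against the contribution of $\partial_v\Psi\,\Delta^\star\Gamma(g)$; your final displayed identity for $\partial_t F\,\mm+\Delta F$ is nevertheless the correct one.
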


\begin{proof}
\textbf{Step 1}.
{$\Phi:[0,T]\to \R$ is locally Lipschitz in $(0,T)$, continuous up to $t=0,T$, and Leibniz rule.}
\\To this aim recall that, in virtue of Lemma \ref{lem:regHF}, for every fixed  $f \in L^2(X,\mm)$  the heat flow $\H_{(\cdot)}f:(0,T]\to \V$ is locally Lipschitz. 
Moreover, since by assumption $f \in \LIP(X)\cap L^{\infty}(X,\mm)$,     on the one hand by maximum principle it holds 
$\|\H_{t} f\|_{L^\infty(X,\mm)}\leq \|f\|_{L^\infty(X,\mm)}$ and on the other hand, in virtue of   \eqref{eq:BEflow1},  it holds  $\H_{t}f \in \LIP(X)$ with uniform 
Lipschitz bounds   for $t \in [0,T]$. It follows that, for a fixed $f \in \LIP(X)\cap L^\infty(X,\mm)$, the pair $(\H_{T-t} f , \Gamma(\H_{T-t}f))$ is  essentially bounded with values in $\R^2$ for $t \in [0, T]$. 
Therefore $\Psi$ is Lipschitz on the range of $(t, \H_{T-t} f , \Gamma(\H_{T-t}f))$  for $t \in [0, T]$, and thus 
$$ [0,T)\ni t \mapsto G(t):=\Psi(t, \H_{T-t} f , \Gamma(\H_{T-t}f)) \in { L^1(X,\mm)} \text{ is locally Lipschitz in $[0,T)$},$$
taking the local Lipschitz property of $t\mapsto \Gamma(\H_t f)$ as a $L^1(X,\mm)$-valued
map into account. In addition, \eqref{eq:der_Gamma}  and a standard chain rule provide the existence for $\LL^1$-a.e. $t_0\in (0,T)$ 
of the strong $L^1(X,\mm)$ derivative of $G$, given by
\begin{equation}\label{eq:dtPhiPf3}
G'(t_0)=\partial_t\Psi -\partial_u \Psi \Delta g- 2\partial_v \Psi\, \Gamma(g, \Delta g), 
\end{equation} 
with $g=\H_{T-t_0}f$ and $\Psi=\Psi(t_0,g,\Gamma g)$. Notice also that, by gradient contractivity,
$\|G\|_{L^\infty(X,\mm)}$ is bounded in $[0,T]$, hence $G$ is continuous in $(0,T]$ as a $L^2(X,\mm)$-valued map.
Using that $\H_t$ is self-adjoint, we can write $\Phi(t)=\int_X G(t)\H_t\varphi\,\d\mm$, and writing
$$
\Phi(t)-\Phi(t_0)=\int_X(G(t)-G(t_0))\H_{t_0}\varphi\,\d\mm+\int_X(\H_t\varphi-\H_{t_0} \varphi)G(t)\,\d\mm
$$
we can use the $L^2(X,\mm)$ continuity of $G$ to obtain the Leibniz rule
\begin{equation}\label{eq:dtPhiPf1}
\Phi'(t_0)=\int_X\bigl(\partial_t\Psi -\partial_u \Psi \Delta g- 2\partial_v \Psi\, \Gamma(g, \Delta g)\bigr)\H_{t_0}\varphi\,\d\mm+
\int_X G(t_0)\Delta\H_{t_0}\varphi\,\d\mm
\end{equation}
for $\LL^1$-a.e. $t_0\in (0,T)$.  This proves also the local Lipschitz property of $\Phi$ in $(0,T)$.
Continuity up to $t=0,T$ follows by \eqref{eq:BEflow1} and the assumption $f\in\LIP(X)$.
\\

\textbf{Step 2}. An intermediate approximation.
\\In this intermediate step, for $t\in (0,T)$ fixed, first we compute by chain rule the laplacian of $\Psi(t, g, \H_{\varepsilon} (\Gamma(g)))$ and then  show that  $\Delta\Psi(t, g, \H_{\varepsilon} (\Gamma(g)))$ admits  an explicit  (weak) limit  as $\varepsilon \downarrow 0$ which formally
can be considered as $\Delta^\ast\Psi(t, g,\Gamma(g))$; the expression of the (weak) limit will be then used in Step 3.
\\ Occasionally we shall
use the notation $\Psi_\epsilon$ for $\Psi(t,g,H_\varepsilon(\Gamma(g)))$, $\Psi$ for $\Psi(t,g,\Gamma(g))$
and analogous ones for $\Psi_u$, $\Psi_v$ and for second order partial derivatives.
\\
Since  we know that $g:=\H_{T-t}(f) \in \LIP(X)$, thus $\Gamma(g)\in L^{\infty}(X,\mm)$ and  $ \H_{\varepsilon}(\Gamma(g))\in L^{\infty}(X,\mm)\cap \LIP(X)\cap \dom(\Delta)$ for all $\varepsilon>0$. Therefore, by standard chain rule 
(see for instance \cite[(2.4)]{Savare2013}) we get that $\Psi(t,g,\H_{\varepsilon}(\Gamma(g))) \in \LIP(X)\cap \dom(\Delta)$ with
\begin{eqnarray}
\Delta \Psi_\varepsilon &=&
\partial_u \Psi(t,g,\H_{\varepsilon}(\Gamma(g))) \, \Delta g    +  \partial^2_u \Psi(t,g,\H_{\varepsilon}(\Gamma(g))) \, \Gamma(g) \nonumber\\
&&+2\, \partial_u \partial_v \Psi(t,g,\H_{\varepsilon}(\Gamma(g))) \, \Gamma(g, \H_{\varepsilon}( \Gamma(g)))+\partial^2_v\Psi (t,g,\H_{\varepsilon}(\Gamma(g)))\, \Gamma(\H_{\varepsilon}(\Gamma(g))) \nonumber\\
&&+\partial_v\Psi(t,g,\H_{\varepsilon}(\Gamma(g)))\Delta \H_{\varepsilon}(\Gamma(g)).\label{eq:DeltaPsieps}
\end{eqnarray}
We now pass to the limit as $\varepsilon \downarrow 0$ in the last formula. To this aim observe that by \eqref{eq:57} we know that $\Gamma(g)\in \V$, so that $\H_{\varepsilon} (\Gamma(g))\to \Gamma(g)$ in $\V$ and therefore, since $\Psi, \partial \Psi, \partial^{2}\Psi$ are Lipschitz on the range of $(t,g,\H_{\varepsilon}\Gamma(g))$ uniformly for $\varepsilon\in [0,1]$ it follows that
\begin{eqnarray}
&&\Psi(t,g,\H_{\varepsilon}(\Gamma(g))) \to \Psi(t,g,\Gamma(g)), \; \partial \Psi(t,g,\H_{\varepsilon}(\Gamma(g))) 
\to \partial \Psi(t,g,\Gamma(g)),  \nonumber\\
 &&   \partial^{2} \Psi(t,g,\H_{\varepsilon}(\Gamma(g))) \to \partial^{2} \Psi(t,g,\Gamma(g)) \; \text{ in  } \V, \label{eq:PsiepstoPsi}
\end{eqnarray}
where for brevity we wrote $\partial \Psi, \partial^{2}\Psi$ in place of $\partial_{u} \Psi, \partial_{v}\Psi, \partial_{u}^{2} \Psi, \partial_{v}^{2} \Psi, \partial_{u}\partial_{v}\Psi$.
Therefore the first two lines of \eqref{eq:DeltaPsieps} pass to the limit in $L^{1}(X,\mm)$ topology as $\varepsilon\downarrow 0$:
\begin{eqnarray}
&&\partial_u \Psi_\varepsilon\, \Delta g    +  \partial^2_u \Psi_\varepsilon \, \Gamma(g) 
+2\, \partial_u \partial_v \Psi_\varepsilon \, \Gamma(g, \H_{\varepsilon}( \Gamma(g)))
+\partial^2_v\Psi_\varepsilon\, \Gamma(\H_{\varepsilon}(\Gamma(g))) \nonumber\\
&\to&  \partial_u \Psi\, \Delta g    +  \partial^2_u \Psi \, \Gamma(g) +2\, \partial_u \partial_v \Psi \, \Gamma(g, \Gamma(g))
+\partial^2_v\Psi\, \Gamma(\Gamma(g)) \quad \text{ in } L^{1}(X,\mm). \label{eq:DeltaPsieps2lines}
\end{eqnarray}
Regarding the convergence of the  last line of \eqref{eq:DeltaPsieps}, observe that  $\Delta^{\star}\Gamma(g)\in \V'$, so that $\Delta \H_{\varepsilon} \Gamma(g) \mm  \to \Delta^{\star}\Gamma(g)$ weakly in $\V'$ topology as $\varepsilon\downarrow 0$. 
Therefore, the combination with \eqref{eq:PsiepstoPsi} gives
\begin{equation}\label{eq:DeltaPsieps3lines}
\langle \partial_v\Psi_\varepsilon, \Delta \H_{\varepsilon}(\Gamma(g)) \mm  \rangle_{\V, \V'} \to  
\langle \partial_v\Psi, \Delta^{\star} \Gamma(g)  \rangle_{\V, \V'}. 
\end{equation}
Combining  \eqref{eq:DeltaPsieps}, \eqref{eq:DeltaPsieps2lines} and  \eqref{eq:DeltaPsieps3lines} we conclude that for every $\varphi \in \LIP(X)\cap L^{\infty}(X,\mm)$ it holds
\begin{eqnarray}
\int_X\varphi\Delta \Psi_\varepsilon  \, \d \mm  &\to&  
\int_X \left[ \partial_u \Psi \, \Delta g    +  \partial^2_u \Psi\, \Gamma(g) +
2\, \partial_u \partial_v \Psi \, \Gamma(g, \Gamma(g))+\partial^2_v\Psi \, \Gamma(\Gamma(g))  \right]  \, \varphi \, \d \mm \nonumber\\
&& + \langle \varphi\;  \partial_v\Psi, \Delta^{\star} \Gamma(g)  \rangle_{\V, \V'}, \quad \text{ as } \varepsilon\downarrow 0.  \label{eq:ConvergenceEps0}
\end{eqnarray}

\textbf{Step 3}. Formula \eqref{eq:dtPhi} holds.\\ 
Set
$$
\Phi_{\varepsilon}(t):=\int_{X} \Psi(t, \H_{T-t}(f), \H_{\varepsilon}(\Gamma(\H_{T-t} f))) \; \H_{t}(\varphi) \, \d \mm. 
$$
By analogous arguments of Step 1, we get that $\Phi_{\varepsilon}$ is locally Lipschitz on $(0,T)$ with 
\begin{eqnarray}\label{eq:dtPhiepsPf1}
\Phi'_{\varepsilon}(t_0)&=&\int_X\big[ \partial_t\Psi(t_{0}, g, \H_{\varepsilon} (\Gamma(g)))  
 -\partial_u \Psi(t_{0}, g , \H_{\varepsilon}( \Gamma(g))) \,   \Delta g \big]  \; \H_{t_0}\varphi\  \,\d\mm \nonumber\\
 &&-2  \int_{X} \partial_v \Psi(t_{0}, g , \H_{\varepsilon}( \Gamma(g)))  \, \H_{\varepsilon} (\Gamma(g, \Delta g)) \; \H_{t_0}\varphi\,\d\mm  \nonumber\\
 && + \int_X \Psi(t_{0}, g , \H_{\varepsilon}( \Gamma(g))) \, \Delta\H_{t_0}\varphi\,\d\mm,
\end{eqnarray}
for $\LL^1$-a.e. $t_0\in (0,T)$. Combining  \eqref{eq:dtPhiPf1}, \eqref{eq:PsiepstoPsi} and \eqref{eq:dtPhiepsPf1} we infer that, given a sequence $\varepsilon_{n}\downarrow 0$ it holds 
\begin{equation}\label{eq:Phi'espPhi'}
\Phi_{\varepsilon_{n}}'(t_{0}) \to \Phi'(t_{0}),\quad 	\text{for $\LL^1$-a.e. $t_0\in (0,T)$}. 
\end{equation}
By Step 2 we know that $\Psi(t,g,\H_{\varepsilon}(\Gamma(g))) \in \LIP(X)\cap \dom(\Delta)$ with $\Delta \Psi(t,g,\H_{\varepsilon}(\Gamma(g)))$ given by \eqref{eq:DeltaPsieps};  therefore we can integrate by parts the laplacian in the last integral in \eqref{eq:dtPhiepsPf1} and use  \eqref{eq:PsiepstoPsi}, \eqref{eq:ConvergenceEps0} in order to pass to the limit as $\varepsilon_{n} \downarrow 0$:
\begin{eqnarray}
\Phi'(t_{0})&=&\lim_{n\to \infty }\Phi'_{\varepsilon_{n}}(t_0) \nonumber\\
&=&\int_X\bigl(\partial_t\Psi -\partial_u \Psi \Delta g- 2\partial_v \Psi\, \Gamma(g, \Delta g)\bigr)\H_{t_0}\varphi\,\d\mm  \nonumber\\
&&+\int_X \bigl(\partial_u \Psi \, \Delta g    +  \partial^2_u \Psi \, \Gamma(g)+2\, \partial_u \partial_v \Psi \, \Gamma(g, \Gamma(g))+\partial^2_v\Psi\, \Gamma(\Gamma(g)) \bigr)  \H_{t_{0}}\varphi\,\d\mm  \nonumber\\
&&+\int_X \partial_v\, \Psi \, \H_{t_0}\varphi\,\d\Delta^\star\Gamma(g), \quad 	\text{for $\LL^1$-a.e. $t_0\in (0,T)$},  \label{eq:limdtPhieps}
\end{eqnarray}
where in the first line we used \eqref{eq:Phi'espPhi'}  and we wrote simply $\Psi$ in place of $\Psi(t_{0},g,\Gamma(g))$.

Recall that, thanks to Theorem~\ref{thm:Gamma2Impr}, 
$$\frac 12 \Delta^\star \Gamma(g)=\Big(\Gamma(g,{\Delta g})+K\Gamma(g)\Big)\mm +  \Gamma^\perp_{2,K} [g]+ 
\gamma_{2,K}[g] \mm,$$ 
where $\Gamma^\perp_{2,K} [g]$ is nonnegative and singular with respect to $\mm$ and  $\gamma_{2,K}[g]\mm$ is the nonnegative absolutely continuous part. Since $\H_{t_0}\varphi$ is nonnegative by the minimum principle of the heat flow and  $\partial_v \Psi\geq 0$ by assumption,  
neglecting the contribution of the singular part the thesis follows.
\end{proof}

\textbf{Proof of Theorem~\ref{thm:LocBob}}

\textbf{Step 1}. It is enough to show the validity of \eqref{eq:LocBob} for every $f \in \LIP(X)$ taking values in $[\varepsilon, 1-\varepsilon]$, for some $\varepsilon\in(0,1/2)$.

Let  $f \in \LIP(X)$ taking values in $[0, 1]$ and for every $\varepsilon\in (0,1/2)$ define $f_{\varepsilon}$ with values in  $[\varepsilon, 1-\varepsilon]$ by truncation as 
 $$f_{\varepsilon}:=\max(\min(f, 1-\varepsilon), \varepsilon).$$
In this first step we show that, assuming the validity of   \eqref{eq:LocBob}  for the truncated $f_{\varepsilon}$, then we can pass into the limit and get the validity of \eqref{eq:LocBob} also for $f$. 
To this aim, first observe that since the truncation satisfies $\Gamma( f_{\varepsilon})\leq \Gamma(f)$ which is essentially bounded 
by the assumption $f\in \LIP(X)$, and since $\cI$ takes values into $\bigl[0, \frac{1}{\sqrt{2 \pi}} \bigr]$, then there exists $C>0$ 
depending on $f$ but not on $\varepsilon$ such that
\begin{equation}\label{eq:unifBound}
 \sqrt{\cI^{2}(f_{\varepsilon})+c_{\alpha}(t)  \Gamma(f_{\varepsilon})}   \leq C \quad \mm\text{-a.e.}, \; \forall \varepsilon\in (0,1/2).  
\end{equation}
Moreover it is readily seen that  $f_{\varepsilon}\to f$ in $\V$-topology  and then, since $\H_{t}(\cdot)$ is continuous as map from $\V$ to $\V$,  also $\H_{t }f_{\varepsilon}\to \H_{t} f$ in $\V$-topology, as $\varepsilon\to 0$.
In particular, for every fixed $t\geq 0$, there exists  a sequence  $\varepsilon_{n}\downarrow 0$ such that 
\begin{equation}\label{eq:fve0}
f_{\varepsilon_{n}}\to f, \quad \H_{t} f_{\varepsilon_{n}}\to \H_{t} f, \quad \Gamma(f_{\varepsilon_{n}})\to \Gamma(f),  \quad \Gamma(\H_{t}f_{\varepsilon_{n}})\to \Gamma(\H_{t}f),  \quad \mm\text{-a.e.}
\end{equation}
as $n\to\infty$.
We can then pass into the limit $\mm$-a.e. as $n\to \infty$ in the left hand side of \eqref{eq:LocBob}. In order to pass into  the limit $\mm$-a.e. also in the right hand side of \eqref{eq:LocBob} observe that thanks to \eqref{eq:unifBound} and \eqref{eq:fve0} we can apply the Dominated Convergence Theorem to infer that 
$$\sqrt{\cI^{2}(f_{\varepsilon_{n}})+c_{\alpha}(t)  \Gamma(f_{\varepsilon_{n}})} \to \sqrt{\cI^{2}(f)+c_{\alpha}(t)  \Gamma(f)} \quad \text{ in $L^{2}(X,\mm)$-strong topology, as }n\to \infty.  $$
Using again the continuity of the heat flow $\H_{t}(\cdot)$ as map from $L^{2}(X,\mm)$ to $L^{2}(X,\mm)$ we conclude that, 
possibly along a subsequence, it holds
$$\H_{t}\left(\sqrt{\cI^{2}(f_{\varepsilon_{n}})+c_{\alpha}(t)  \Gamma(f_{\varepsilon_{n}})}\right) \to \H_{t}\left( \sqrt{\cI^{2}(f)+c_{\alpha}(t)  \Gamma(f)} \right) \quad \mm\text{-a.e. as } n\to \infty. $$
Therefore we can pass into the limit  $\mm$-a.e. also in the right hand side of \eqref{eq:LocBob} and conclude that \eqref{eq:LocBob} holds also for $f$.
\\

\textbf{Step 2}. Explicit computation of $\zeta$.
\\Thanks to Step 1 we can assume $f$ to take values into $[\varepsilon, 1-\varepsilon]$, for some $\varepsilon \in (0,1/2)$. Therefore by maximum principle also $\H_{t} f$ will take values into $[\varepsilon, 1-\varepsilon]$, and thus $\cI(\H_{T-t} f)$ will take values into 
$\bigl[\delta, \frac{1}{\sqrt{2\pi}}-\delta\bigr]$ for some $\delta\in (0,1/2\sqrt{2\pi})$ 
depending on $\varepsilon$. It follows that, in the range of $(t, H_{T-t} f,  \Gamma(\H_{T-t} f))$, the function
\begin{equation}\label{eq:PsiSqrt}
\Psi(t,u,v):=\sqrt{\cI^{2}(u)+ c_{\alpha}(t) v}, \quad (t,u,v)\in (0,T)\times \left[\delta, \frac{1}{\sqrt{2\pi}}-\delta \right] \times [0,\infty)
\end{equation}
is of class $C^4$ and we can then apply Lemma \ref{lem:compLocBobkov}. Writing $\cI, \cI'$ in place of $\cI(u), \cI'(u)$ for brevity, it is immediate to check that
$$
\Psi \, \partial_t\Psi =\frac{c_{\alpha}'}{2} v, \qquad \Psi \, \partial_u \Psi= \cI \, \cI', \qquad \Psi \, \partial_v \Psi=\frac{c_{\alpha}}{2},
$$
and, by differentiating once more and using that $\cI\, \cI''=-1$,
$$
\Psi^3 \, \partial_u^2\Psi=-\cI^2 \, \cI'^{2}+ \Psi^2(\cI'^{2}-1), \qquad 
\Psi^3 \partial_u\partial_v\Psi=-\frac{c_{\alpha}}{2} \cI\,\cI', \qquad 
\Psi^3 \partial_v^2\Psi=-\frac{c_{\alpha}^2}{4}.
$$
Therefore, the corresponding function $\zeta$ defined in \eqref{eq:defzeta}, satisfies $\mm$-a.e. 
\begin{eqnarray}
\Psi^{3} \zeta&=& \Psi^{2} \frac{c_{\alpha}'(t)}{2} \Gamma(g)-\cI^{2}(g) \, \cI'^{2}(g) \Gamma(g)+ \Psi^{2}(\cI'^{2}(g)-1)\Gamma(g) \nonumber \\
&&-c_{\alpha}(t) \cI(g)\,\cI'(g)   \,   \Gamma(g, \Gamma(g)) -\frac{c_{\alpha}^{2}(t)}{4} \, \Gamma(\Gamma(g))  + K \Psi^{2}\, c_{\alpha}(t)  \,\Gamma(g) +   \Psi^{2} c_{\alpha}(t)  \gamma_{2,K}[g], \nonumber
\end{eqnarray}
where we set $g=\H_{T-t} f$ and  $\Psi=\Psi(t,g,\Gamma(g))$ for brevity. Since $\Psi^2(t,u,v)=\cI^2(u)+c_\alpha(t)v$, it 
follows that the following equalities hold $\mm$-a.e.:
\begin{eqnarray}
\Psi^{3} \zeta&=& \left[\cI^{2}(g)+c_{\alpha}(t) \Gamma(g) \right] \frac{c_{\alpha}'(t)}{2} \Gamma(g)-\cI^{2}(g) \, \cI'^{2}(g) \Gamma(g)+  \left[\cI^{2}(g)+c_{\alpha}(t) \Gamma(g) \right] (\cI'^{2}(g)-1)\Gamma(g) \nonumber \\
&&-c_{\alpha}(t) \cI(g)\,\cI'(g)   \,   \Gamma(g, \Gamma(g)) -\frac{c_{\alpha}^{2}(t)}{4} \, \Gamma(\Gamma(g))  + K  \left[\cI^{2}(g)+c_{\alpha}(t) \Gamma(g) \right]\, c_{\alpha}(t)  \,\Gamma(g)  \nonumber \\
&& +   \left[\cI^2(g)+c_{\alpha}(t) \Gamma(g) \right]\, c_{\alpha}(t)  \,\gamma_{2,K}[g]  \nonumber\\
&=& c_{\alpha}(t) \Gamma^2(g) \left(\frac{c_{\alpha}'(t)}{2}-1 \right) +c_{\alpha}^{2}(t) \left[ - \frac{1}{4} \Gamma(\Gamma(g)) + \Gamma(g)  \,\gamma_{2,K}[g]  \right]+ \cI^2(g)  \Gamma(g) \left(\frac{c_{\alpha}'(t)}{2}-1 \right)  \nonumber \\
&& - c_{\alpha}(t) \cI'(g)\, \cI(g) \, \Gamma(g, \Gamma(g)) + c_{\alpha}(t) \, \cI'^{2}(g)\, \Gamma^2(g) +   K  \left[\cI^{2}(g)+c_{\alpha}(t) \Gamma(g) \right] c_{\alpha}(t)  \,\Gamma(g)  \nonumber \\
&& +   \cI^{2}(g) \, c_{\alpha}(t)  \,\gamma_{2,K}[g].\nonumber
\end{eqnarray}
By the very definition \eqref{eq:defcalpha} of $c_{\alpha}(t)$, we have that $\frac{c_{\alpha}'(t)}{2}-1=-K c_{\alpha}(t)$ so that the
above formula for $\zeta$ simplifies to
\begin{eqnarray}
\Psi^3 \zeta&=&c_\alpha^2(t)\left[ -\frac{1}{4} \Gamma(\Gamma(g))+ \Gamma(g)  \,\gamma_{2,K}[g] \right]  \nonumber \\
&&+c_{\alpha}(t) \left[ -  \cI'(g)\, \cI(g) \, \Gamma(g, \Gamma(g)) + \cI'^{2}(g)\, \Gamma^2(g)+   \cI^{2}(g)  \,\gamma_{2,K}[g] \right] \; \; \; \mm\text{-a.e. }. \label{eq:Psi3z}
\end{eqnarray}

\textbf{Step 3}. Conclusion of the proof. 
\\First of all observe that thanks to Lemma~\ref{lem:regHF} we can  apply   Theorem~\ref{thm:Gamma2Impr} (which, let us stress,  is implied by the curvature condition $\RCD(K,\infty)$) to $g=\H_{T-t} f$ and get 
\begin{equation}\label{eq:GammaGamma}
\Gamma\left(\Gamma (g)\right) \le 4 \Gamma(g)\; \gamma_{2,K}  [g], \quad \mm\text{-a.e. }.
\end{equation}
Since $\Psi=\Psi(t,\H_{T-t}f, \Gamma(\H_{T-t} f))>0$ as we are assuming $f$ to take values into $[\varepsilon,1-\varepsilon]$, the combination of \eqref{eq:dtPhi},  \eqref{eq:Psi3z} and \eqref{eq:GammaGamma}  gives
\begin{equation}\label{eq:QuadForm}
\frac{d}{dt}_{|t=t_{0}} \Phi(t) \geq  \int_{\{\Gamma(g)>0\}} \frac{c_{\alpha}(t)}{\Psi^{3}}  \left[ \cI^{2}(g) \frac{\Gamma(\Gamma(g))}{4 \Gamma(g)}  -  \cI'(g)\, \cI(g) \, \Gamma(g, \Gamma(g)) + \cI'^{2}(g)\, \Gamma^2(g) \right]\; \H_{t_{0}} \varphi \, \d \mm, 
\end{equation}
for all $\varphi\in L^{\infty}(X,\mm),$ with  $\varphi\geq 0$  $\mm$-a.e. in $X$. 
Notice that the restriction of the region of integration
to $\{\Gamma(g)>0\}$ is possible because the nonnegativity of $\gamma_{2,K}[g]$ and $\Gamma(\Gamma(g))=0$ $\mm$-a.e.
on $\{\Gamma(g)=0\}$ imply that $\zeta$ is nonnegative $\mm$-a.e. on $\{\Gamma(g)=0\}$.
%
\\Now observe that the right hand side of \eqref{eq:QuadForm} is a quadratic form in $\cI(g), \cI'(g)$ which is positive  definite since 
$$\frac{\Gamma(\Gamma(g))}{4 \Gamma(g)}\geq 0,\quad
\Gamma(g)^2 \geq 0,\quad\text{and}\quad  \Gamma(g, \Gamma(g))\leq 
\Gamma^{{ 1/2}}(g)\; \Gamma^{{1/2}}(\Gamma(g)), \quad \mm\text{-a.e. }.  $$
We conclude that for $\LL^{1}$-a.e. $t_{0}\in (0,T)$ it holds $\frac{d}{dt}_{|t=t_{0}} \Phi(t) \geq 0$. Since by 
Lemma~\ref{lem:compLocBobkov} we know that $\Phi$ is continuous in $[0,T]$ and locally Lipschitz in $(0,T)$, we can integrate to get
\begin{eqnarray}
0&\leq&\int_0^T \Phi'(t) \, \d t=\Phi(T)-\Phi(0) \nonumber \\
&=& \int_{X} \left[ \H_{T} \left( \sqrt{\cI^{2}(f)+c_{\alpha}(T)  \Gamma(f)} \right)-  \sqrt{\cI^{2}(\H_{T}f)+ \alpha \Gamma(\H_{T}f)}   \right] \; \varphi \, \d \mm, 
\end{eqnarray}
for all $\varphi\in L^{\infty}(X,\mm),$ with  $\varphi\geq 0$  $\mm$-a.e., which completes the proof of Theorem~\ref{thm:LocBob}.
\hfil$\Box$

\section{Bobkov  and Gaussian isoperimetric inequalities for  $\RCD(K,\infty)$ probability spaces, $K>0$}

In this section we specialize the local Bobkov Inequalitiy  \eqref{eq:LocBob} to the case $K>0$, 
and use it to prove the  Gaussian Isoperimetric Inequality.

\begin{theorem}[Bobkov Inequality]\label{thm:Bob}
Let $(X,\sfd,\mm)$ be an $\RCD(K,\infty)$ probability space for some $K>0$. Then, for every $f \in \V$ with values in $[0,1]$, it holds
\begin{equation}\label{eq:Bob}
\sqrt{K} \; \cI\left(\int_{X} f \, \d\mm \right) \leq \int_{X} \sqrt{K \cI^{2}(f)+\Gamma(f)} \, \d\mm.
\end{equation}
\end{theorem}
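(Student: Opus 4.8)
The plan is to derive the (global) Bobkov inequality \eqref{eq:Bob} from the local Bobkov inequality of Theorem~\ref{thm:LocBob} by taking the limit $t\to+\infty$. First I would observe that it suffices to prove \eqref{eq:Bob} for $f\in\LIP(X)$ with values in $[0,1]$: for general $f\in\V$ with values in $[0,1]$ one approximates by Lipschitz functions $f_n\to f$ in $\V$ (using that $\LIP(X)\cap L^\infty(X,\mm)$ is dense in $\V$, truncating to keep values in $[0,1]$), so that $\int_X f_n\,\d\mm\to\int_X f\,\d\mm$, $\cI(f_n)\to\cI(f)$ and $\Gamma(f_n)\to\Gamma(f)$ in the appropriate senses, and both sides of \eqref{eq:Bob} pass to the limit by continuity of $\cI$ and Fatou/dominated convergence (the integrand $\sqrt{K\cI^2(f_n)+\Gamma(f_n)}$ is controlled by $\sqrt{K/(2\pi)}+\sqrt{\Gamma(f_n)}$, with $\Gamma(f_n)\to\Gamma(f)$ in $L^1$).

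Next, for $f\in\LIP(X)$ with values in $[0,1]$, I would apply Theorem~\ref{thm:LocBob} with $\alpha=0$. Since $K>0$, the constant is $c_0(t)=\frac{1-e^{-2Kt}}{K}$, which increases monotonically to $1/K$ as $t\to+\infty$. Thus \eqref{eq:LocBob} reads
\begin{equation*}
\cI(\H_t f)\le \H_t\Big(\sqrt{\cI^2(f)+\tfrac{1-e^{-2Kt}}{K}\,\Gamma(f)}\Big)\qquad\mm\text{-a.e.}
\end{equation*}
Now I integrate this inequality against $\mm$ over $X$. On the right-hand side, since $\H_t$ is mass-preserving and self-adjoint (apply self-adjointness against $\varphi\equiv 1$), $\int_X \H_t(g)\,\d\mm=\int_X g\,\d\mm$, so the right-hand side integral equals $\int_X\sqrt{\cI^2(f)+\frac{1-e^{-2Kt}}{K}\Gamma(f)}\,\d\mm$, which by monotone (or dominated) convergence tends to $\int_X\sqrt{\cI^2(f)+\frac1K\Gamma(f)}\,\d\mm$ as $t\to+\infty$; multiplying through by $\sqrt K$ this is exactly $\frac1{\sqrt K}\int_X\sqrt{K\cI^2(f)+\Gamma(f)}\,\d\mm$. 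On the left-hand side, by the ergodicity property \eqref{eq:ergod} we have $\H_t f\to\int_X f\,\d\mm$ in $L^2(X,\mm)$, hence (along a subsequence) $\mm$-a.e., and since $\cI$ is bounded and continuous, $\int_X\cI(\H_t f)\,\d\mm\to\cI\big(\int_X f\,\d\mm\big)$ by dominated convergence. Passing to the limit in the integrated inequality gives $\cI\big(\int_X f\,\d\mm\big)\le\frac1{\sqrt K}\int_X\sqrt{K\cI^2(f)+\Gamma(f)}\,\d\mm$, which is \eqref{eq:Bob} after multiplying by $\sqrt K$.

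The main obstacle, though it is mild here, is making sure the two limits $t\to+\infty$ interact correctly and that integrating the $\mm$-a.e. inequality \eqref{eq:LocBob} over the finite measure $\mm$ is legitimate — both sides of \eqref{eq:LocBob} are nonnegative and bounded (the left by $1/\sqrt{2\pi}$, the right by $\H_t$ of a bounded function, hence bounded), so integration is harmless and Fubini/self-adjointness applies. One should also note that on the left we only need convergence of $\int_X\cI(\H_t f)\,\d\mm$, which follows from $L^2$-convergence of $\H_tf$ plus boundedness and continuity of $\cI$, avoiding any delicate pointwise issue; and on the right the integrand is monotone increasing in $t$ (since $c_0(t)$ is), so the limit is clean. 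Thus the proof reduces to an application of Theorem~\ref{thm:LocBob} together with the ergodicity \eqref{eq:ergod} and mass conservation of the heat semigroup.
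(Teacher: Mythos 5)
Your proposal is correct and follows essentially the same route as the paper: both derive \eqref{eq:Bob} from Theorem~\ref{thm:LocBob} by integrating, using mass preservation of $\H_t$, letting $t\to+\infty$ via the ergodicity property \eqref{eq:ergod}, and then extending to $f\in\V$ by density of Lipschitz functions. The only (immaterial) difference is that the paper takes $\alpha=1/K$, so that $c_\alpha(t)\equiv 1/K$ and one drops the nonnegative term $\frac1K\Gamma(\H_tf)$ on the left, whereas you take $\alpha=0$ and instead let $c_0(t)\uparrow 1/K$ on the right.
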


\begin{proof}
Inequality \eqref{eq:Bob} for $f \in \LIP(X)$ taking values in $[0,1]$ follows  by the combination of the ergodicity of the Heat flow and the local Bobkov inequality. More precisely,  set   $\alpha=1/K$ (so that $c_{\alpha}(t)=1/K$) and let $t\to + \infty$ in  
$$
\int_X \cI(\H_{t}f)\,\d\mm\leq \int_X\sqrt{\cI^{2}(f)+\frac 1K  \Gamma(f)}  \,\d\mm, \quad \mm\text{-a.e.},
$$
derived from \eqref{eq:LocBob} dropping one nonnegative term and integrating both sides;  
\eqref{eq:Bob} follows then by applying the ergodicity property \eqref{eq:ergod}.
\\The validity of  \eqref{eq:Bob}  for all $f \in \V$ taking values in $[0,1]$ follows by the density of Lipschitz functions in $\V$, see  \cite[Proposition 4.10]{Ambrosio-Gigli-Savare11b}. 
\end{proof}

Applying \eqref{eq:Bob} to functions approximating the characteristic function of a set $E\subset X$ we get the desired Gaussian isoperimetric inequality.
More precisely, recall the standard definition of $BV(X,\sfd,\mm)\subset L^1(X,\mm)$ space and of total variation $|\rmD f|(X)$
$$
|\rmD f|(X):=\inf\left\{\liminf_{n\to\infty}\int_X {\rm lip\,} f_n\,\d\mm:\
f_n\in\LIP(X),\,\,\lim_{n\to\infty}\int_X|f_n-f|\,\d\mm=0\right\},
$$
we can pass to the limit in the inequality (derived from \eqref{eq:Bob} from the subadditivity of the square root
and the inequality $\sqrt{\Gamma(f)}\leq{\rm lip\,}f$ $\mm$-a.e.)
$$
\sqrt{K} \; \cI\left(\int_X f \, \d\mm \right) \leq \int_X \bigl(\sqrt{K} \,\cI(f)+{\rm lip\,} f\bigr) \, \d\mm
$$
for all $f\in\LIP(X)$ with values in $[0,1]$ to get
$$
\sqrt{K} \; \cI\left(\int_X f \, \d\mm \right) \leq \sqrt{K}\int_X  \cI(f)\,\d\mm+|\rmD f|(X)
\qquad\forall f\in BV(X,\sfd,\mm).
$$
In particular, by applying this to characteristic functions $f=\chi_E$, since $\cI(0)=\cI(1)=0$,
we obtain the Gaussian isoperimetric inequality with the perimeter ${\mathcal P}(E):=|\rmD \chi_E|(X)$:

\begin{theorem}\label{thm:IsopIneq}
Let $(X,\sfd,\mm)$ be an $\RCD(K,\infty)$ probability space for some $K>0$. Then, for every Borel subset $E\subset X$ it holds
\begin{equation}\label{eq:Isop}
{\mathcal P}(E)\geq \sqrt{K} \,\cI(\mm(E)).
\end{equation}
\end{theorem}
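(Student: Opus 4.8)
The plan is to derive Theorem~\ref{thm:IsopIneq} from the Bobkov inequality~\eqref{eq:Bob} by a routine relaxation procedure; the entire analytic content of the statement is already packed into~\eqref{eq:Bob}, so what remains is essentially bookkeeping. First I would pass from~\eqref{eq:Bob} to a linearized inequality for Lipschitz competitors: given $f\in\LIP(X)$ with values in $[0,1]$, subadditivity of the square root gives $\sqrt{K\cI^2(f)+\Gamma(f)}\le\sqrt{K}\,\cI(f)+\sqrt{\Gamma(f)}$ $\mm$-a.e., and since $\sqrt{\Gamma(f)}=|Df|_w\le{\rm lip\,}f$ $\mm$-a.e., Theorem~\ref{thm:Bob} yields
\[
\sqrt{K}\,\cI\!\left(\int_X f\,\d\mm\right)\le\sqrt{K}\int_X\cI(f)\,\d\mm+\int_X{\rm lip\,}f\,\d\mm.
\]

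Next I would relax this to $BV$. Fix $f\in BV(X,\sfd,\mm)$ with $0\le f\le1$ (nothing is lost by this restriction, since only characteristic functions will be used, and composing with the $1$-Lipschitz retraction $r(s)=\max(\min(s,1),0)$ does not increase the total variation anyway). Choose $f_n\in\LIP(X)$ with $f_n\to f$ in $L^1(X,\mm)$ and $\int_X{\rm lip\,}f_n\,\d\mm\to|\rmD f|(X)$; replacing $f_n$ by $r\circ f_n$ we may assume $0\le f_n\le1$, using ${\rm lip\,}(r\circ f_n)\le{\rm lip\,}f_n$ and the fact that $r\circ f_n\to f$ still in $L^1$. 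Passing to a subsequence with $f_n\to f$ $\mm$-a.e., dominated convergence (justified by the uniform bound $0\le f_n\le1$ and the continuity and boundedness of $\cI$) gives $\int_X f_n\,\d\mm\to\int_X f\,\d\mm$ and $\int_X\cI(f_n)\,\d\mm\to\int_X\cI(f)\,\d\mm$; continuity of $\cI$ handles the left-hand side. Therefore
\[
\sqrt{K}\,\cI\!\left(\int_X f\,\d\mm\right)\le\sqrt{K}\int_X\cI(f)\,\d\mm+|\rmD f|(X),\qquad\forall f\in BV(X,\sfd,\mm),\ 0\le f\le1.
\]

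Finally I would specialize to $f=\chi_E$ for a Borel set $E$ with $\mathcal P(E)=|\rmD\chi_E|(X)<\infty$ (inequality~\eqref{eq:Isop} being trivial when $\mathcal P(E)=\infty$). Since $\chi_E$ takes only the values $0$ and $1$ and $\cI(0)=\cI(1)=0$, we have $\int_X\cI(\chi_E)\,\d\mm=0$ while $\int_X\chi_E\,\d\mm=\mm(E)$, so the previous display collapses to $\sqrt{K}\,\cI(\mm(E))\le\mathcal P(E)$, which is~\eqref{eq:Isop}. The only steps needing any care are the existence of a recovery sequence for $|\rmD f|(X)$ and the harmless reduction to $[0,1]$-valued competitors; I do not expect a genuine obstacle, since all the geometry is already encoded in~\eqref{eq:Bob}, hence ultimately in the local Bobkov inequality~\eqref{eq:LocBob} together with the ergodicity~\eqref{eq:ergod} of the heat semigroup.
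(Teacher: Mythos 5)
Your proposal is correct and follows essentially the same route as the paper: linearize Bobkov's inequality \eqref{eq:Bob} via subadditivity of the square root and $\sqrt{\Gamma(f)}\le{\rm lip\,}f$, relax to $BV(X,\sfd,\mm)$ using the relaxed definition of $|\rmD f|(X)$, and specialize to $f=\chi_E$ using $\cI(0)=\cI(1)=0$. The extra care you take with the truncation $r\circ f_n$ and the dominated-convergence step is a sound filling-in of details the paper leaves implicit.
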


\def\cprime{$'$}


\begin{thebibliography}{10}


\bibitem{AmbrosioGigliMondinoRajala}
{\sc L.~Ambrosio, N.~Gigli, A.~Mondino and T.~Rajala},
  \emph{Riemannian Ricci curvature lower bounds in metric spaces with $\sigma$-finite measure},
  Trans. Amer. Math. Soc.,   \textbf{367},  (7),  (2015), 4661--4701.

\bibitem{Ambrosio-Gigli-Savare08}
  {\sc L.~Ambrosio, N.~Gigli and G.~Savar\'e},
  {\em Gradient flows in metric spaces and in the space of probability measures}, 
  Lectures in Mathematics ETH Z\"urich, Birkh\"auser Verlag, Basel, second~ed., 2008.

 \bibitem{Ambrosio-Gigli-Savare11}
  \leavevmode\vrule height 2pt depth -1.6pt width 23pt,
  {\em Calculus and heat flow in  metric measure spaces and applications to spaces with {R}icci bounds from
  below},    Invent. Math.,   \textbf{195},  (2),  (2014), 289--391.
  
 \bibitem{Ambrosio-Gigli-Savare11b}
  \leavevmode\vrule height 2pt depth -1.6pt width 23pt,
  {\em Metric measure spaces with Riemannian Ricci curvature bounded from below},
Duke Math. J.,  \textbf{163}, (2014), 1405--1490. 

\bibitem{AGSBaEm}
\leavevmode\vrule height 2pt depth -1.6pt width 23pt,
  {\em   Bakry-\'Emery curvature-dimension condition and Riemannian Ricci curvature bounds},
 Ann. Probab.,  \textbf{43},  (1),  (2015), 339--404. 
 
 \bibitem{AmbrosioHonda}
{\sc L.~Ambrosio, S.~Honda},
{\em New stability results for sequences of metric measure spaces with uniform Ricci bounds from below.}
In preparation.

 
 \bibitem{AmbrosioMondinoSavare}
{\sc L.~Ambrosio,  A.~Mondino and G.~Savar\'e},
{\em On the Bakry-\'Emery condition, the gradient estimates and the Local-to-Global property of $\RCD^*(K,N)$ metric measure spaces}, J. Geom.  Anal., \textbf{26}, (1), (2016),  24--56.

\bibitem{AmbrosioMondinoSavareNLDiff} 
\leavevmode\vrule height 2pt depth -1.6pt width 23pt,
{\em Nonlinear diffusion equations and curvature conditions in metric measure spaces},  preprint  arXiv:1509.07273.



  \bibitem{B94} {\sc D.~Bakry,}
 {\em  L'hypercontractivit\'e et son utilisation en th\'eorie des semigroupes,} in Lectures
on probability theory (Saint-Flour, 1992), vol. 1581 of Lecture Notes in Math., Springer,
Berlin, (1994),  1--114.
 

\bibitem{BGL14}	
{\sc D.~Bakry, I.~Gentil and M.~Ledoux},
{\em  Analysis and Geometry of Markov Diffusion Operators}, Grundlehren der
math.  Wiss., Springer, (2014). 

\bibitem{BE83} {\sc D.~Bakry and M.~\'Emery},
 {\em Diffusions hypercontractives}, S\'emin. de probabilit\'es XIX, Univ. Strasbourg, Springer, (1985).

\bibitem{BL96} 
{\sc D.~Bakry and  M.~Ledoux}, 
{\em L\'evy-Gromov's isoperimetric inequality for an infinite dimensional diffusion generator},  Invent. Math., \textbf{123}, (1996), 259--281.
	

\bibitem{Bobkov97} {\sc S.G.~Bobkov,} 
{\em An isoperimetric inequality on the discrete cube, and an elementary proof of
the isoperimetric inequality in Gauss space}, Ann. Probab.,  \textbf{25}, (1), (1997), 206--214. 
	
\bibitem{Borell} {\sc C.~Borell}, 
{\em The Brunn-Minkowski inequality in Gauss space,} Invent. Math., \textbf{30}, (2), (1975),  207--216.


\bibitem{CM}  {\sc F.~Cavalletti and A.~Mondino}
{\em  Sharp and rigid isoperimetric inequalities in metric-measure spaces with lower Ricci curvature bounds}, preprint 	arXiv:1502.06465, (2015).

\bibitem{Cheeger00} {\sc J.~Cheeger},
  {\em Differentiability of {L}ipschitz functions on metric measure spaces},
  Geom. Funct. Anal., \textbf{9} (1999), 428--517.






\bibitem{ErbarKuwadaSturm}
        {\sc M. Erbar, K. Kuwada, K.-T. Sturm},
        {\em On the equivalence of the entropic curvature-dimension condition and Bochner's inequality on metric measure spaces,}
        Invent. Math., \textbf{201} (2015), 993--1071. 

\bibitem{GigliMondinoSavare}
{\sc N.~Gigli, A.~Mondino and G.~Savar\'e},
{\em Convergence of pointed non-compact metric measure spaces and stability of Ricci curvature bounds and heat flows},   Proc. London Math. Soc., \textbf{111}, (5),  (2015), 1071--1129.

\bibitem{Gigli}
        {\sc N. Gigli},
        {\em Nonsmooth differential geometry --
An approach tailored for spaces with Ricci curvature bounded from below.}
        ArXiv preprint 1407.0809. To appear on Mem. Am. Math. Soc. 
      
      
\bibitem{Gro} {\sc M.~Gromov,}
{\em Metric structures for Riemannian and non Riemannian spaces},
Modern Birkh\"auser Classics, (2007).
      
\bibitem{Lott-Villani09} {\sc J.~Lott  and C.~Villani},
 {\em Ricci curvature for metric-measure spaces
  via optimal transport}, Ann. of Math. \textbf{169} (2009), 903--991.






\bibitem{Savare2013}
{\sc G.~Savar\'e},
{\em Self-improvement of the Bakry-\'Emery condition and Wasserstein contraction of the heat flow in $\RCD(K,\infty)$ metric measure spaces}, 
Disc. Cont. Dyn. Sist. A, \textbf{34}, (2014), 1641-1661. 


\bibitem{Sturm06I} {\sc K.T.~Sturm},
{\em On the geometry of
  metric measure spaces. {I}}, Acta Math. \textbf{196} (2006), 65--131.

\bibitem{Sturm06II}
\leavevmode\vrule height 2pt depth -1.6pt width 23pt, {\em On the geometry of
 metric measure spaces. {II}}, Acta Math. \textbf{196} (2006), 133--177.

\bibitem{SuCi} {\sc V.N.~Sudakov, B.S.~Cirel'son},
{\em Extremal properties of half-spaces for spherically invariant
measures}, Zap. Nauc. Semin. LOMI 41, 14--24, 165 (1974). Problems in the theory of probability
distributions, II.


\bibitem{Villani09}
{\sc C.~Villani}, {\em Optimal transport. Old and new}, vol.~338 of Grundlehren
  der Mathematischen Wissenschaften, Springer-Verlag, Berlin, 2009.

\end{thebibliography}
\end{document}